\documentclass{amsart}
\usepackage{amsmath, amscd, amssymb, amsthm}
\usepackage{bbm}
\usepackage{latexsym}
\usepackage{amsfonts}
\usepackage{graphicx}
\usepackage[all,cmtip]{xy}
\usepackage[colorlinks,linkcolor=blue,breaklinks=true,urlcolor=blue,citecolor=blue,anchorcolor=blue,pagebackref]{hyperref}%
\usepackage{geometry}
\newtheorem{theorem}{Theorem}
\newtheorem{lemma}{Lemma}
\newtheorem{corollary}[theorem]{Corollary}

\newtheorem{proposition}{Proposition}

\newtheorem{conjecture}{Conjecture}

\newcommand{\C}{{\mathbb C}}

\renewcommand*\backref[1]{}
\renewcommand*\backrefalt[4]{ \ifcase #1 \or (cited on page #2) \else (cited on pages #2) \fi}

\newcommand{\be}{\begin{equation}}
\newcommand{\ee}{\end{equation}}
\newcommand{\bea}{\begin{eqnarray}}
\newcommand{\eea}{\end{eqnarray}}

\def\XXint#1#2#3{{\setbox0=\hbox{$#1{#2#3}{\int}$ }
\vcenter{\hbox{$#2#3$ }}\kern-.6\wd0}}

\numberwithin{equation}{section}
\allowdisplaybreaks

\begin{document}

\title[Locally conformally K\"ahler manifolds with constant Levi-Civita or Bismut holomorphic sectional curvature]
{Locally conformally K\"ahler manifolds with constant Levi-Civita or Bismut holomorphic sectional curvature}

\author{Shuwen Chen}
\address{Shuwen Chen. School of Mathematical Sciences, Chongqing Normal University, Chongqing 401331, China}
\email{{3153017458@qq.com}}\thanks{Zheng is the corresponding author. He is partially supported by National Natural Science Foundations of China
with the grant No. 12471039 and  12141101, and is supported by the 111 Project D21024.}

\author{Fangyang Zheng}
\address{Fangyang Zheng. School of Mathematical Sciences, Chongqing Normal University, Chongqing 401331, China}
\email{20190045@cqnu.edu.cn; franciszheng@yahoo.com} \thanks{}

\subjclass[2010]{53C55 (primary), 53C05 (secondary)}
\date{}

\begin{abstract}
An old conjecture in non-K\"ahler geometry states that any compact Hermitian manifold with constant Chern holomorphic sectional curvature must be either K\"ahler or Chern flat. The conjecture is known to be true in dimension 2 but still open in dimensions 3 or higher, except for several special classes of Hermitian manifolds. For the important class of locally conformally K\"ahler manifolds, the conjecture was proved by H. Chen, L. Chen, and Nie in 2021 when the constant holomorphic sectional curvature is non-positive and the remaining case was solved recently by Huang and Wan using the result of Kamishima on Bochner-K\"ahler manifolds. In this article, we use their technique to answer similar questions for locally conformally K\"ahler manifolds with constant Levi-Civita or Bismut holomorphic sectional curvature.
\end{abstract}

\subjclass[2010]{53C55 (primary), 53C05 (secondary)}
\keywords{Levi-Civita connection, Bismut connection, holomorphic sectional curvature, locally conformal K\"ahler manifold, Bochner-K\"ahler metric, Hopf manifold}

\maketitle

\markleft{Shuwen Chen and Fangyang Zheng}
\markright{lcK manifolds with constant holomorphic sectional curvature}

\tableofcontents

\section{Introduction and statement of results}\label{intro}

A long-lasting open question in non-K\"ahler geometry is the following:

\begin{conjecture} \label{conj1}
Any compact Hermitian manifold with constant Chern holomorphic sectional curvature must be either K\"ahler or Chern flat.
\end{conjecture}

The conjecture is known to be true in dimension 2 by the work of Balas and Gauduchon \cite{Balas, BG} in 1985 when the constant is zero or negative, and by Apostolov, Davidov, and Muskarov \cite{ADM} in 1996 when the constant is positive. In dimension 3 and higher, the conjecture is still largely open, except in several special cases. For instance, the conjecture was confirmed for all twistor spaces by Davidov, Grantcharov, and Muskarov \cite{DGM}, for all Chern K\"ahler-like manifolds by Tang \cite{Tang}, for all Bismut K\"ahler-like manifolds by Rao and Zheng \cite{RaoZ}, for all compact normal balanced threefolds with non-positive constant Chern holomorphic sectional curvature by Ma and Nie \cite{MaN},  for all complex nilmanifolds by Li and Zheng \cite{LiZ},  and for all non-balanced Bismut torsion-parallel manifolds by Chen and Zheng \cite{ChenZ26}. See \cite{Zheng} for a recent survey on this conjecture.

Locally conformally K\"ahler  manifolds form an important class of  special Hermitian manifolds and there have been extensively studies in the literature. We refer the readers to the recent book by Ornea and Verbitsky \cite{OV} and the references therein for more details. For such  manifolds, the conjecture was proved by H. Chen, L. Chen, and Nie \cite{CCN} in 2021 when the constant is negative or zero. In a recent breakthrough, Huang and Wan \cite{HW} solved the remaining case when the constant is positive, thus establishing the conjecture for all locally conformally K\"ahler manifolds. In summary, their results can be stated as follows

\begin{theorem}[Chen-Chen-Nie \& Huang-Wan, \cite{CCN,HW}] \label{thm0}
Let $(M^n,g)$ be a compact locally conformally K\"ahler manifold with constant Chern holomorphic sectional curvature. Then it must be K\"ahler, thus a complex space form.
\end{theorem}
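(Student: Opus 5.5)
We would follow the strategy of Chen--Chen--Nie and Huang--Wan. Write the locally conformally K\"ahler condition as $d\omega = \frac{1}{n-1}\,\theta\wedge\omega$ with closed Lee form $\theta$; if $n=2$ the statement is already covered by Balas--Gauduchon and Apostolov--Davidov--Muskarov, so assume $n\ge 3$. Locally $\omega = e^{f}\omega_K$ with $\omega_K$ K\"ahler and $\theta = df$ (up to normalization). The Chern connection of $\omega$ is then the Levi-Civita connection of $\omega_K$ plus the explicit torsion terms built from $\partial f$. Consequently the Chern curvature $R$ of $g$ is expressed through the K\"ahler curvature $R^K$ of $g_K$, the Hessian $\partial\bar\partial f$, and quadratic terms in $\partial f$.

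The first step is to impose $H(X)=R_{X\bar X X\bar X}/|X|^4 = c$ and symmetrize. Since $R^K$ has K\"ahler symmetries, the condition fixes the full symmetrization of $R$. We therefore obtain an identity expressing $R^K$ as $c\,e^{f}$ times the constant-curvature tensor plus symmetrized terms of the form $\omega_K \odot \beta$, where $\beta$ is a real $(1,1)$-form built from $\sqrt{-1}\partial\bar\partial f$ and $\sqrt{-1}\partial f\wedge\bar\partial f$. A curvature tensor of the form (multiple of $\omega_K\odot\omega_K$) $+\ \omega_K\odot\beta$ has vanishing Bochner tensor. Hence $g_K$ is locally Bochner--K\"ahler. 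The remaining comparison of the non-symmetrized components, namely the torsion-related parts of $R$ not captured by $H$, should give further constraints: an equation for $\partial\bar\partial f$ in terms of $|\partial f|^2$ and $c$.

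Next we split by the sign of $c$. For $c\le 0$ we follow Chen--Chen--Nie and work globally on compact $M$. Take the Gauduchon-type integral, or apply the maximum principle to $|\theta|^2$. The derived equation, roughly $\Delta |\theta|^2 \ge$ a positive multiple of $|\theta|^4$ minus $c|\theta|^2$ plus gradient terms, then forces $\theta\equiv 0$, so $g$ is K\"ahler.

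For $c>0$, the main obstacle, we pass to the minimal K\"ahler covering $\tilde M$. There $g_K$ is a global Bochner--K\"ahler metric on which the deck group acts by homotheties, and at least one of these homotheties is non-isometric if $M$ is not globally conformally K\"ahler. We then invoke Kamishima's classification of Bochner--K\"ahler manifolds with such homothety actions. It says $\tilde M$ with $g_K$ is locally flat $\C^n$ minus a point with homothety group, so $M$ is covered by a Hopf manifold. We then check directly that on a Hopf manifold the conformal factor $e^{f}=|z|^{-2}$, up to the relevant structure, gives Chern holomorphic sectional curvature that is not constant: by explicit computation it depends on the angle between $X$ and the radial direction. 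This contradicts the hypothesis, so $\theta$ is exact. Then $g$ is globally conformal to a K\"ahler metric, and the same integral argument with $f$ global, using compactness and the constraint equation at maximum and minimum points of $f$, forces $f$ to be constant. Thus $g$ is K\"ahler with constant holomorphic sectional curvature, i.e.\ a complex space form.
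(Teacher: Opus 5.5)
Your proposal has two genuine gaps: the strict case is closed against the wrong metric, and the globally conformally K\"ahler case rests on a constraint equation that does not exist.

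\textbf{The strict case.} Theorem \ref{thm:uniformization} only identifies the \emph{K\"ahler} metric: $(\widetilde M,\widetilde h)\cong(\mathbb{C}^n\setminus\{0\},h_0)$, with deck transformations $z\mapsto r_\gamma U_\gamma z$. It says nothing about the conformal factor. One has $\widetilde g=e^{-2u}h_0$, where $u$ is constrained only by $u(\gamma z)=u(z)+\log r_\gamma$. So $g$ can be $e^{-\psi}$ times the standard Hopf metric for any $\psi\in C^\infty(M)$, and all of these have the same flat K\"ahler cover. Your check that $|z|^{-2}h_0$ has non-constant Chern holomorphic sectional curvature is correct (it vanishes in the radial direction but not orthogonally to it), but it rules out only that one metric.

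What is missing is to feed flatness back into (\ref{eq:Ac}). From $0=R^{\widetilde h}=L_{\widetilde h}(A)$, Lemma \ref{lemma-injective} gives $A=0$, i.e.\ $\sqrt{-1}\partial\overline{\partial}u=\frac{c}{2}e^{-2u}\omega_0$. Applying $d$ gives $c\,e^{-2u}du\wedge\omega_0=0$.
\begin{itemize}
\item If $c\neq0$, then $u$ is constant, which forces $c=0$, a contradiction.
\item If $c=0$, then $u$ is pluriharmonic and extends across $0$ by Hartogs. Letting $z\to0$ in $u(\gamma z)=u(z)+\log r_\gamma$ with $r_\gamma\neq1$ is absurd.
\end{itemize}
This excludes the strict case for \emph{every} $c$, so your split by the sign of $c$ is unnecessary.

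\textbf{The globally conformally K\"ahler case.} By Lemma \ref{lemma1}, $H\equiv c$ is equivalent to $\widehat R=\frac c2G$ and imposes nothing on $R-\widehat R$. Its entire content is $R^h=L_h(A)$. Also, for the Chern connection no terms quadratic in $\partial u$ occur, by (\ref{eq:RhR}). So when $c>0$ there is no pointwise equation for $\partial\overline{\partial}f$ to evaluate at the extrema of $f$. Moreover, the Chen--Chen--Nie argument you invoke as ``the same integral argument'' is exactly what does not reach $c>0$.

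The ingredient you need is global. Here $(M,h)$ is a \emph{compact} Bochner--K\"ahler manifold, hence locally symmetric by Theorem \ref{thm4}. So $\nabla^hR^h=0$, and therefore $\nabla^hA=0$ by injectivity of $L_h$. From there, Huang--Wan's eigenvalue dichotomy with the integration and max--min arguments concludes, as in Proposition \ref{prop2}.

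There is an even more direct route. The equation $\nabla_mA_{k\bar\ell}=-\frac c2e^{-2u}u_mh_{k\bar\ell}-\frac12u_{k\bar\ell m}=0$, with $u_{k\bar\ell m}$ symmetric in $k,m$, forces $c\,\partial u=0$. For $c=0$, $\sqrt{-1}\partial\overline{\partial}u$ is parallel, so $\Delta_hu$ is constant; integrating shows it is $0$, so $u$ is constant.

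Your $c\le0$ branch likewise rests on an inequality for $\Delta|\theta|^2$ that is only asserted ``roughly'' and never derived. With the two ingredients above, that branch is not needed at all.
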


There are similar  questions to Conjecture \ref{conj1} when the Chern connection is replaced by Levi-Civita or Bismut connection, namely, there are

\begin{conjecture} \label{conj2}
Any compact Hermitian manifold with constant Levi-Civita holomorphic sectional curvature must be either K\"ahler or Levi-Civita flat.
\end{conjecture}

\begin{conjecture} \label{conj3}
Any compact Hermitian manifold with non-zero constant Bismut holomorphic sectional curvature must be K\"ahler.
\end{conjecture}

Note that in the Bismut case, we required the constant to be non-zero in the hypothesis, as there are examples of compact Hermitian manifolds with zero Bismut holomorphic sectional curvature but are not Bismut flat. Generally speaking, Conjectures \ref{conj2} and \ref{conj3} tend to be harder to confirm than Conjecture \ref{conj1}, even though in dimension 2 both of them have been established. For Conjecture \ref{conj2}, the dimension 2 case was proved by Sato and Sekigawa \cite{SS} in 1990 when the constant is negative or zero, and finished by \cite{ADM} when the constant is positive. For Conjecture \ref{conj3}, the dimension 2 case was done by \cite{ChenZ}.

Our main observation here is that, for locally conformally K\"ahler manifolds, the constancy condition on Chern, Levi-Civita, or Bismut holomorphic sectional curvature only differ by a simple term, which does not involve the Bochner part of the curvature decomposition. So the technique of Huang and Wan can be borrowed here to allow the confirmation of Conjectures \ref{conj2} and \ref{conj3} for all compact locally conformally K\"ahler manifolds. The following are the main results of this article:

\begin{theorem}  \label{thm1}
Let $(M^n,g)$ be a compact locally conformally K\"ahler manifold. If its Levi-Civita holomorphic sectional curvature is a constant, then it must be K\"ahler, thus a complex space form.
\end{theorem}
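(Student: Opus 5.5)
We follow the strategy of Chen--Chen--Nie and Huang--Wan behind Theorem \ref{thm0}, working on the universal cover. Write $\omega$ for the K\"ahler form of $g$ and $\theta$ for the Lee form, so $d\omega=\theta\wedge\omega$ with $d\theta=0$. On the universal cover $\widetilde M$ we have $\theta=d f$, and $\tilde g=e^{-f}g$ is K\"ahler. The deck group $\Gamma$ acts by holomorphic homotheties of $\tilde g$. We will prove that if $g$ is not K\"ahler, i.e. $\theta\not\equiv 0$, then constant Levi-Civita holomorphic sectional curvature forces a contradiction.

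\emph{Step 1: curvature formulas.} For an lcK metric, the Chern, Bismut and Levi-Civita connections differ from one another by explicit tensors built from $\theta$. We will compute $R^{LC}(X,\bar X,X,\bar X)$ for a unit $(1,0)$ vector $X$ in terms of two pieces:
\begin{itemize}
\item the curvature of the K\"ahler metric $\tilde g$, rescaled by $e^{f}$;
\item terms in $\theta$ and $\nabla\theta$, namely $|\theta(X)|^2$, $|\theta|^2$ and $\nabla\theta(X,\bar X)$, with specific coefficients.
\end{itemize}
Because the Chern holomorphic sectional curvature has the same structure with different coefficients, the difference $H^{LC}-H^{Ch}$ will be a quadratic form in $X$ built only from $\theta$ and its derivatives. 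In particular it does not involve the Bochner part of the curvature of $\tilde g$.

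\emph{Step 2: polarize.} Constancy $H^{LC}\equiv c$ says that the totally symmetrized curvature tensor equals that of a space form. Polarizing, we express the Bochner tensor of $\tilde g$ through this symmetrization. Since the correction terms from Step 1 are of ``Ricci type'' (products of a Hermitian form with $\tilde g$), the Bochner tensor of $\tilde g$ must vanish. So $\tilde g$ is Bochner--K\"ahler. Comparing the remaining trace parts then gives a tensor identity. This identity determines the Ricci form of $\tilde g$ in terms of $c$, $f$, $\partial f\wedge\bar\partial f$ and $\sqrt{-1}\partial\bar\partial f$. The identity should be rigid: for example, a Hessian condition of the form $\nabla\bar\nabla f=a\,\partial f\bar\partial f+b\,\tilde g$, with specific constants coming from the Levi-Civita coefficients.

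\emph{Step 3: conclude.} This is the main obstacle. Following Huang--Wan, we invoke Kamishima's classification of compact lcK manifolds whose K\"ahler cover is Bochner--K\"ahler with nontrivial homothety group: these are quotients of $\mathbb C^n\setminus\{0\}$ with a flat-cone structure, i.e. Hopf-type manifolds. Alternatively, $\Gamma$ acts by isometries and $g$ is then conformally flat-like. We then need to check that on Hopf manifolds the Hessian identity from Step 2 forces $\theta=0$.

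For $c\le 0$ we would first try the cheaper route of Chen--Chen--Nie. Integrate the trace of the identity against a Gauduchon-type volume, or apply the maximum principle to $|\theta|^2$ on compact $M$, to obtain $\theta=0$ directly.

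For $c>0$ we would use the explicit form of the identity on $\mathbb C^n\setminus\{0\}$, where $f=\log|z|^2$ up to scaling. There we verify that the Levi-Civita coefficients cannot match both the $\partial f\bar\partial f$ term and the constant term unless $c$ and $\theta$ vanish, which contradicts $c>0$ or non-K\"ahlerness.

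In either case $g$ is K\"ahler, and a K\"ahler metric of constant holomorphic sectional curvature is a complex space form.
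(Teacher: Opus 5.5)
\textbf{Gap in Step~3.} Your Steps 1--2 agree with the paper's Proposition~1. The Levi-Civita correction $-\frac12 Q$, with $Q=|\nabla^g u|^2G-L_g(\xi)$, lies in the image of $L$, so the K\"ahler metric $\tilde h$ is Bochner--K\"ahler with $R^{\tilde h}=L_{\tilde h}(A^g)$. This is a curvature identity, not yet a Hessian equation with constant coefficients: in $2wA^g=\beta h-\chi_w$ ($w=e^u$), the coefficient $\beta=\frac{c}{2w}+\frac1w|\partial w|^2$ is a function.

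The problem is that Step~3 splits along the wrong dichotomy. The relevant split is whether the Lee form is exact, not the sign of $c$. Kamishima--Fried uniformization needs a deck transformation with $r_\gamma\neq1$, i.e.\ a non-exact Lee form. When $\theta$ is exact but nonzero, $h$ is a Bochner--K\"ahler metric on the compact $M$ itself, and you give no argument for this case. There is nothing for $c>0$, and the ``cheaper route'' for $c\le 0$ is a wish rather than an argument.

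The paper needs a separate idea here:
\begin{itemize}
\item Compact Bochner--K\"ahler implies locally symmetric (Kamishima, Bryant). Then $\nabla^hR^h=0$ and injectivity of $L_h$ make $A^g$ parallel.
\item The resulting local splitting along its eigenspaces gives $R^h_{X\bar XY\bar Y}=0$ for eigenvectors of distinct eigenvalues. This forces either $A^g=ah$ or exactly two eigenvalues $\pm\lambda$.
\item If $A^g=ah$, then $\sqrt{-1}\partial\bar\partial w=F\omega_h$ with $F$ constant (apply $d$). Integrating the trace gives $F=0$, so $w$ is constant.
\item In the $\pm\lambda$ case, evaluate $\chi_w$ at a maximum of $w$ on $E_{-\lambda}$ and at a minimum of $w$ on $E_{\lambda}$. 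This gives $c<0$ and $c>0$ at once, a contradiction.
\end{itemize}

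\textbf{Gap in the strict case.} Here you presuppose the answer by taking $f=\log|z|^2$, i.e.\ assuming $g$ is the standard Hopf metric. Kamishima--Fried only gives $\tilde h\cong h_0$ on $\mathbb{C}^n\setminus\{0\}$ with deck transformations $z\mapsto rUz$. The lifted metric is $w^{-2}h_0$ for an unknown positive $w$ with $w(rUz)=rw(z)$. So showing that $w=|z|$ violates the Levi-Civita condition proves nothing.

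The missing argument runs as follows:
\begin{itemize}
\item Flatness of $\tilde h$ and injectivity of $L$ give $A^g=0$, i.e.\ $\sqrt{-1}\partial\bar\partial w=\beta\omega_0$.
\item Applying $d$ gives $d\beta\wedge\omega_0=0$, so $\beta$ is constant since $n\ge2$.
\item Pulling back by a holomorphic $\gamma$ with $r\neq1$ gives $r\,\partial\bar\partial w=r^2\,\partial\bar\partial w$, hence $\beta=0$ and $w$ is pluriharmonic.
\item By Hartogs, $w$ extends across the origin, and letting $z\to0$ in $w(rUz)=rw(z)$ forces $w(0)=0$.
\item Then the nonconstant harmonic function $w\ge0$ attains an interior minimum at $0$, contradicting the strong minimum principle.
\end{itemize}
This homogeneity mismatch ($w$ scales like $r$, $\omega_0$ like $r^2$) followed by Hartogs is the key step your plan lacks. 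It handles the strict case for every value of $c$, so no split by sign is needed there.
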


\begin{theorem}  \label{thm2}
Let $(M^n,g)$ be a compact locally conformally K\"ahler manifold. If its Bismut holomorphic sectional curvature is a constant $c$, then either $g$ is K\"ahler, or $c=0$ and $(M^n,g)$ is an isosceles Hopf manifold.
\end{theorem}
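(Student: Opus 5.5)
We would follow the Chen--Chen--Nie / Huang--Wan strategy. Write the lcK metric locally as $g=e^{f}\tilde g$ with $\tilde g$ K\"ahler, and let $\theta$ be the Lee form, so $d\omega=\theta\wedge\omega$ with $\theta=df$ locally. The Chern, Levi-Civita and Bismut connections differ by explicit tensors built from the torsion, which for lcK metrics is expressed entirely through $\theta$. So the first step is a computation. Express the Bismut holomorphic sectional curvature $H^b(X)=R^b(X,\bar X,X,\bar X)/|X|^4$ in terms of the Chern one $H^c$ plus a quadratic form in $\theta$ and $\nabla\theta$. We expect an identity of the shape
\[
R^b_{X\bar XX\bar X}=R^c_{X\bar XX\bar X}+a\,|X|^2\bigl(\theta_{X;\bar X}\text{-type terms}\bigr)+b\,|\theta_X|^2|X|^2+c'\,|\theta|^2|X|^4 ,
\]
with universal constants. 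Because $R^c$ of $g$ is the K\"ahler curvature of $\tilde g$ modified by terms in $\partial\bar\partial f$, the Bochner (totally trace-free) part is untouched. Constancy $H^b\equiv c$ then forces, exactly as in the Chern case, that the Bochner tensor of $\tilde g$ vanishes: $\tilde g$ is locally Bochner--K\"ahler. It also gives an explicit algebraic/PDE relation among $\tilde R$'s Ricci part, $\partial\bar\partial f$, and $\partial f\otimes\bar\partial f$. This is the paper's main observation, so it should mirror the Levi-Civita computation used for Theorem \ref{thm1}, with different coefficients.

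Next we pass to the universal cover, or the minimal covering $\hat M$ on which $\theta$ is exact. There $\tilde g=e^{-f}g$ is a global Bochner--K\"ahler metric on which the deck group $\Gamma$ acts by homotheties, not all isometries (otherwise $g$ is globally conformally K\"ahler, hence K\"ahler by compactness and Vaisman's theorem). Here we invoke Kamishima's classification of Bochner--K\"ahler manifolds admitting such non-isometric homothetic cocompact-type actions, following Huang--Wan. The conclusion is that the developing map identifies $\tilde g$ with the flat metric on $\C^n\setminus\{0\}$, with $\Gamma$ containing a contraction. Then we solve the relation from the first step for $f$ on $\C^n\setminus\{0\}$. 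The Hessian-type equation should force $e^{f}$ to be a function of $|z|^2$, essentially $|z|^{-2}$, with the constant $c$ pinned down. For Bismut, the term $c'|\theta|^2$ and the mixed terms must cancel. We expect this to happen exactly for $g=|z|^{-2}g_{\mathrm{flat}}$ and $c=0$, which is the standard Hopf metric; its Bismut connection is known to have vanishing holomorphic sectional curvature. The quotient is then an isosceles Hopf manifold. If the constant coming from the computation is nonzero, the equation has no non-constant admissible solution, so $\theta=0$ and $g$ is K\"ahler.

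The main obstacle is the second step: verifying that the hypotheses of Kamishima's theorem (as used by Huang--Wan) still hold. In particular, the scalar/Ricci data of $\tilde g$ must be related to $f$ in a way that gives homothetic rather than merely conformal deck transformations. We also have to handle the case where $\tilde g$ is not flat, e.g.\ locally of the form $\C P^k\times\C H^{n-k}$ or the Bryant-type ones. For those we would argue as Huang--Wan do, using the relation from the first step and compactness (a maximum-principle argument on $|\theta|^2$ on $M$) to rule them out. The first step is routine but sign-sensitive: the coefficient of $|\theta|^2$ decides whether the Hopf case survives precisely at $c=0$.
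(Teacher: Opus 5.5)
There is a genuine gap. Your outline of the strict case matches the paper. The conformal relation makes $\tilde h=e^{2u}\tilde g$ Bochner--K\"ahler, with $R^h=L_h(A^b)$. The Huang--Wan (Kamishima--Fried) uniformization then gives $(\tilde M,\tilde h)\cong(\mathbb{C}^n\setminus\{0\},h_0)$ with deck maps $z\mapsto rUz$. The ``obstacle'' you flag is not one: $\tilde g$ is deck-invariant and $du=-\frac12\pi^*\theta$, so deck maps are automatically $\tilde h$-homotheties.

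\textbf{First gap: the globally conformally K\"ahler case.} You dismiss it as ``K\"ahler by compactness and Vaisman's theorem''. That is false. Vaisman's theorem says an lcK metric on a compact manifold of K\"ahler type is globally conformally K\"ahler, not the converse. Indeed $e^{-2u}h$, with $h$ K\"ahler and $u$ nonconstant, is never K\"ahler when $n\ge 2$. This case is exactly where the curvature hypothesis must be combined with compactness. It is also where the locally symmetric models $M^p_\kappa\times M^{n-p}_{-\kappa}$ occur, so your worry about non-flat models in the strict case is misplaced; uniformization already gives flatness there. The paper argues as follows:
\begin{itemize}
\item $h$ is a compact Bochner--K\"ahler metric, hence locally symmetric by Kamishima/Bryant.
\item Injectivity of $L_h$ makes $A^b$ parallel. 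Vanishing of mixed curvature in the de Rham splitting then forces either $A^b=ah$ or $A^b$ has eigenvalues $\pm\lambda$.
\item With $f=e^{2u}$ one has $\chi_f=\alpha h-4fA^b$, where $\alpha=c+|\partial f|^2/f$.
\item If $A^b=ah$, then $\sqrt{-1}\partial\overline\partial f=F\omega_h$ forces $F$ constant, and integrating the trace gives $F=0$, so $f$ is constant.
\item If the eigenvalues are $\pm\lambda$, test $\chi_f$ on $E_{-\lambda}$ at a maximum of $f$ and on $E_{\lambda}$ at a minimum; this gives $c<0$ and $c>0$.
\end{itemize}
Your suggested maximum principle for $|\theta|^2$ does not replace this argument.

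\textbf{Second gap: the endgame on $\mathbb{C}^n\setminus\{0\}$.} You only assert that the equation ``should force'' $f$ to be radial. Flatness and injectivity give $A^b=0$, i.e.\ $\sqrt{-1}\partial\overline\partial f=\alpha\omega_0$. Then $d\alpha\wedge\omega_0=0$ makes $\alpha$ constant. So $F=f-\alpha|z|^2$ is pluriharmonic, extends over the origin by Hartogs, and the equivariance $F(rUz)=r^2F(z)$ kills every homogeneous component except the quadratic one. This gives $f=\alpha|z|^2+q+\overline q$ with $q={}^t\!zSz$, which is \emph{not} yet radial. You must still substitute into $|\partial f|^2=(\alpha-c)f$ and compare types, obtaining $(\alpha+c)q=0$ and $4S\overline S=-\alpha c I$. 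If $q\neq 0$, then $S$ is unitarily congruent to $\frac{|\alpha|}{2}I$, so $f$ vanishes somewhere, contradicting $f>0$. Only then do you get $f=\alpha|z|^2$, $\alpha>0$, and $c=0$.

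Your closing remark that $c\neq 0$ gives ``$\theta=0$'' conflates the two cases. In the strict case, $c\neq0$ is simply contradictory; you are then in the globally conformally K\"ahler case and need the argument above. Finally, the isosceles Hopf conclusion still requires a group-theoretic step. You must show that $\gamma\mapsto\log r_\gamma$ has finite kernel and discrete nonzero image. Then a single map $z\mapsto rUz$ with $0<r<1$ generates a finite-index subgroup of the deck group.
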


Recall that an isosceles Hopf manifold is a compact Hermitian manifold $(M^n,g)$ such that a finite unbranched cover of it is the quotient of the punctured complex Euclidean space by the infinite cyclic group generated by a holomorphic contraction $\gamma$, in the form
$$ \big({\mathbb C}^n\setminus \{0\}\big)/ \langle \gamma \rangle , \ \ \ \gamma(z_1,  \ldots , z_n) = (a_1z_1,  \ldots , a_nz_n), $$
where $|a_1|= \cdots = |a_n| \in (0,1)$, equipped with the metric whose K\"ahler form is
$$\omega = \frac{\alpha }{|z|^2} \sqrt{-1} \,\partial \overline{\partial} \,|z|^2, $$
where $\alpha >0$ is a constant and $|z|^2 = |z_1|^2 + \cdots + |z_n|^2$. In other words, an isosceles Hopf manifold is a special kind of diagonalizable linear Hopf manifold where the standard metric conformal to the Euclidean metric descends. Such a manifold has vanishing Bismut holomorphic sectional curvature, and it is Bismut flat when and only when $n=2$.

As is well-known, complete K\"ahler manifolds with constant holomorphic sectional curvature are called {\em complex space forms.} Up to multiplying the metric by a positive constant, their universal covers are either ${\mathbb C}{\mathbb P}^n$, ${\mathbb C}^n$, or ${\mathbb C}{\mathbb H}^n$, equipped with the standard metrics. The point in Conjectures \ref{conj1} through \ref{conj3} is to understand the Hermitian version of the space forms, and in the case of Chern or Levi-Civita connection, the conjectured conclusion is that the metric must be either K\"ahler (hence a complex space form) or flat. The Bismut connection case is slightly more complicated as there are examples of manifolds with vanishing Bismut holomorphic sectional curvature but are not Bismut flat, even though such manifolds seem to be highly restrictive and their classification or characterization might be an interesting problem itself.

For the flat cases, we recall that compact Chern flat manifolds are exactly compact quotients of complex Lie groups equipped with compatible left-invariant Hermitian metrics, by the classic result of Boothby \cite{Boothby}, while compact Bismut flat manifolds are exactly the compact quotients of Samelson spaces, which are simply-connected Lie groups with bi-invariant metrics and compatible left-invariant complex structures. Such spaces are known to be Bismut flat \cite{Pittie} and the converse was due to Wang, Yang, and Zheng \cite{WYZ} in 2020.

Compact Levi-Civita flat Hermitian manifolds are lesser known. By the classic Bieberbach theorem, we know that such manifolds (up to a finite unbranched cover) must be a flat torus $T^{2n}_{\mathbb R}$ equipped with a compatible complex structure. Such a complex structure must be constant when $n\leq 2$, but there are non-constant (hence non-K\"ahlerian) ones when $n\geq 3$. In \cite{KYZ}, Khan, Yang, and Zheng classified all such spaces in complex dimension $3$, but for $n\geq 4$ this is still an open question, even though there are lots of non-K\"ahlerian examples.

The proofs of Theorems \ref{thm1} and \ref{thm2} follow the technique and approach of Huang and Wan in \cite{HW}, and this article is organized as follows. In the next two sections, we will set up the notations and recall some known results. In \S 4 we will give the curvature calculation for Hermitian metrics under conformal changes. In \S 5, we will deal with the globally conformally K\"ahler case, and leave the strictly  locally conformally K\"ahler case to \S 6.

\vspace{0.3cm}

\section{Holomorphic sectional curvatures}

In this section, we will recall the notion of holomorphic sectional curvature for the three canonical metric connections of Hermitian manifolds. Let $(M^n,g)$ be a Hermitian manifold of complex dimension $n\geq 2$. Suppose $D$ is a metric connection on $M^n$, namely, $D$ satisfies $Dg=0$. Its torsion and curvature are defined by
$$ T^D(x,y)=D_xy-D_yx-[x,y], \ \ \ \ \ \ R^D_{xy}z=D_xD_yz-D_yD_xz -D_{[x,y]}z, $$
for any vector fields $x$, $y$, $z$ on $M$. Using the metric, we will also write $R^D(x,y,z,w)$ or the abbreviation $R^D_{xyzw}$ for $\langle R^D_{xy}z, w\rangle$, where $g=\langle , \rangle$ is the metric, extended as a complex bilinear form. Note that since $D$ is metric, the $4$-tensor $R^D$ is skew-symmetric with respect to its first two positions as well as its last two positions, so one can define the {\em sectional curvature} of $D$ for any $2$-plane $\pi \subset T_pM$ contained the tangent space of $M$:
\begin{equation*}
K^D(\pi ) = R^D(x,y,y,x)/ |x\wedge y|^2,
\end{equation*}
where $\{x,y\}$ is any basis of $\pi$, and $|x\wedge y|^2 =|x|^2|y|^2 - \langle x,y\rangle^2$. Clearly, $K^D(\pi)$ is independent of the choice of the basis. In particular, when $\pi$ is $J$-invariant, one can choose a basis of $\pi$ in the form $\{ x,Jx\}$ and get the  {\em holomorphic sectional curvature} of $D$:
\begin{equation}
H^D(X) = R^D(x,Jx,Jx,x)/ |x|^4 = R^D(X,\overline{X}, X, \overline{X})/|X|^4,
\end{equation}
where $X=x-\sqrt{-1}Jx$ is the corresponding type $(1,0)$ complex tangent vector. The following fact is well-known so we omit the proof here:
\begin{lemma}\label{lemma1}
Let \(D\) be a  metric connection on a Hermitian manifold \((M^n,g)\).  Then
we have
\begin{equation}\label{eq:H=c}
H^D\equiv c   \ \ \ \Longleftrightarrow \ \ \ \widehat R^D_{i\bar j k\bar\ell}
 =\frac{c}{2}
 \left(g_{i\bar{j}}g_{k\bar{\ell}}
      +g_{i\bar{\ell}}g_{k\bar{j}}\right), \ \ \forall \ 1\leq i,j,k,\ell \leq n,
\end{equation}
under any local frame $e=\{ e_1, \ldots , e_n\}$ of type $(1,0)$ complex vector fields on $M^n$. Here we used
\begin{equation}\label{eq:sym}
 \widehat P_{i\bar j k\bar\ell}
 =\frac14\left(
 P_{i\bar j k\bar\ell}
 +P_{k\bar j i\bar\ell}
 +P_{i\bar\ell k\bar j}
 +P_{k\bar\ell i\bar j}\right)
\end{equation}
to denote the symmetrization of a tensor $P$.
\end{lemma}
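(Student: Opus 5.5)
The plan is to reduce everything to a pointwise statement about a quartic polynomial on a complex vector space, using polarization. Fix a point $p$ and a local unitary (or arbitrary) frame $e$. For $X=\sum_i X^i e_i$ set
$$Q(X)=R^D(X,\overline X,X,\overline X)=\sum R^D_{i\bar j k\bar\ell}X^i\overline{X^j}X^k\overline{X^\ell}.$$
By definition $H^D(X)=Q(X)/|X|^4$, where $|X|^2=g_{i\bar j}X^i\overline{X^j}$ (up to the harmless normalization between $x$ and $X=x-\sqrt{-1}Jx$). The condition $H^D\equiv c$ is then equivalent to the identity of real-valued polynomials
$$Q(X)=c\,|X|^4 \quad \text{for all } X\in T^{1,0}_pM.$$

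Both sides are polynomials in $X,\overline X$ of bidegree $(2,2)$. Such a polynomial $\sum P_{i\bar jk\bar\ell}X^i\overline{X^j}X^k\overline{X^\ell}$ depends only on the part of $P$ symmetric in the unbarred indices $i,k$ and in the barred indices $j,\ell$, which is exactly $\widehat P$ from (\ref{eq:sym}). Conversely, a polynomial of bidegree $(2,2)$ vanishing identically has all its symmetrized coefficients zero. To see this, treat $X$ and $\overline X$ as independent variables, which is legitimate since a real-analytic polynomial in $X,\overline X$ vanishes on $\mathbb C^n$ iff it vanishes as a polynomial in $2n$ independent variables. Then take the mixed partial $\partial_{X^i}\partial_{X^k}\partial_{\overline{X^j}}\partial_{\overline{X^\ell}}$, which returns $4\widehat P_{i\bar jk\bar\ell}$.

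Next expand $c|X|^4=c\,g_{i\bar j}g_{k\bar\ell}X^i\overline{X^j}X^k\overline{X^\ell}$. The symmetrization of $g_{i\bar j}g_{k\bar\ell}$ is $\tfrac12(g_{i\bar j}g_{k\bar\ell}+g_{i\bar\ell}g_{k\bar j})$, using the fact that the terms with both pairs swapped coincide. Hence $Q-c|X|^4\equiv0$ iff
$$\widehat R^D_{i\bar jk\bar\ell}=\frac c2\big(g_{i\bar j}g_{k\bar\ell}+g_{i\bar\ell}g_{k\bar j}\big).$$
This holds at every point and for every frame, since both sides transform tensorially.

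The only genuine point to check is the normalization relating $R^D(x,Jx,Jx,x)/|x|^4$ to $R^D(X,\overline X,X,\overline X)/|X|^4$. Expanding $X=x-\sqrt{-1}Jx$ and using the skew-symmetries of $R^D$ shows that $R^D(X,\overline X,X,\overline X)$ is a fixed multiple of $R^D(x,Jx,Jx,x)$. The same multiple relates $|X|^4$ (with the paper's convention $g_{i\bar j}=\langle e_i,\overline{e_j}\rangle$) to $|x|^4$, so the ratio is unchanged. Note that only the skew-symmetries of $R^D$ are used here, not any Bianchi identity, which is why the lemma holds for an arbitrary metric connection $D$. This bookkeeping is routine, and I expect no real obstacle.
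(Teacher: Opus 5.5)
Your proof is correct. The paper omits the proof of this lemma as well known, and your polarization argument is the standard one: treat $X$ and $\overline{X}$ as independent variables via the real-linear change of coordinates, then apply $\partial_{X^i}\partial_{X^k}\partial_{\overline{X^j}}\partial_{\overline{X^\ell}}$, which extracts $4\widehat{P}_{i\bar jk\bar\ell}$ from a bidegree $(2,2)$ form. The normalization you deferred also checks out using only the two skew-symmetries, so it holds for an arbitrary metric connection: $R^D(X,\overline{X},X,\overline{X})=4R^D(x,Jx,Jx,x)$ and $|X|^2=2|x|^2$, so the two expressions for $H^D$ agree.
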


Given a Hermitian manifold $(M^n,g)$, we will denote by $\nabla$, $\nabla^g$, $\nabla^b$ the Chern, Levi-Civita, and Bismut connection, respectively. Denote by $R$, $R^g$, $R^b$ the corresponding curvature tensors. The Chern, Levi-Civita, and Bismut holomorphic sectional curvatures are denoted by $H$, $H^g$, and $H^b$. Also let us write $T$ for the torsion tensor of the Chern connection $\nabla$. Under any local frame $e$ of type $(1,0)$ vector fields, write
\begin{equation}
T(e_i,e_k) = \sum_j T^j_{ik} e_j, \ \ \ \  \ \ \forall \ 1\leq i,k\leq n,
\end{equation}
for the Chern torsion components under $e$. We have $T^j_{ik}=-T^j_{ki}$ for any $i,j,k$. As is well-known, one always has $T(e_i, \overline{e}_k)=0$.
Now suppose that the local frame  $e=\{ e_1, \ldots , e_n\}$ is unitary, namely, $\langle e_i, \overline{e}_j\rangle = \delta_{ij}$ for any $1\leq i,j\leq n$. Then the relationships between the three curvature tensors are given by

\begin{lemma}
Let $e$ be a local unitary frame on the Hermitian manifold $(M^n,g)$. Then the Bismut and Levi-Civita curvatures are related to the Chern curvature by
\begin{eqnarray}
R^b_{i\bar{j}k\bar{\ell}} & = & R_{i\bar{j}k\bar{\ell}} +T^{\ell}_{ik,\bar{j}} + \overline{ T^k_{j\ell ,\bar{i}} } +\sum_s \big\{ T^s_{ik} \overline{ T^s_{j\ell }} - T^{\ell}_{is} \overline{ T^k_{js }} \big\},   \label{eq:RbR}\\
R^g_{i\bar{j}k\bar{\ell}} & = & R_{i\bar{j}k\bar{\ell}}  + \frac{1}{2} \big( T^{\ell}_{ik,\bar{j}} + \overline{ T^k_{j\ell ,\bar{i}} } \big)  +\frac{1}{4}\sum_s \big\{ T^s_{ik} \overline{ T^s_{j\ell }} - T^{\ell}_{is} \overline{ T^k_{js }} - T^{j}_{ks} \overline{ T^i_{\ell s }} \big\}, \label{eq:RgR}
\end{eqnarray}
for any $1\leq i,j,k,\ell \leq n$, where indices after comma stand for covariant derivatives with respect to the Chern connection $\nabla$.
\end{lemma}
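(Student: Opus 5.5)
The plan is to write each of the two connections as the Chern connection plus an explicit tensor, and then expand the curvature. Write $\nabla^b=\nabla+A^b$ and $\nabla^g=\nabla+A^g$, where $A^b,A^g$ are $\mathrm{End}(TM)$-valued $1$-forms. For any metric connection $D=\nabla+A$ the curvature satisfies
\begin{equation*}
R^D_{xy}=R_{xy}+(\nabla_xA)_y-(\nabla_yA)_x+[A_x,A_y]+A_{T(x,y)} .
\end{equation*}
We evaluate this at $x=e_i$, $y=\overline{e}_j$ and pair it with $e_k,\overline{e}_\ell$. Since $T(e_i,\overline{e}_k)=0$, the last term drops out. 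What remains has two parts: a linear part in $\nabla A$, which will produce the Chern covariant derivatives $T^\ell_{ik,\bar j}$ and $\overline{T^k_{j\ell,\bar i}}$, and a quadratic part $[A_{e_i},A_{\overline{e}_j}]$, which will produce the $T\overline{T}$ terms.

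The first step is to compute $A^b$ and $A^g$ in a unitary frame. They come from the Koszul-type formulas: $\nabla^b$ is the unique Hermitian connection with totally skew torsion, and $\nabla^g$ is torsion free. In terms of the Chern torsion one gets, up to conventions,
\begin{equation*}
\langle A^b_{e_i}e_k,\overline{e}_\ell\rangle = T^\ell_{ik},\qquad
\langle A^b_{\overline{e}_j}e_k,\overline{e}_\ell\rangle = -\overline{T^k_{j\ell}},
\end{equation*}
and $A^b$ has no component sending $T^{1,0}$ to $T^{0,1}$. For the Levi-Civita connection, $A^g$ has $J$-commuting part equal to $\frac12 A^b$. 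It also has a $J$-anticommuting part $\beta$ with $\langle\beta_{e_i}\overline{e}_k,\overline{e}_\ell\rangle$ and $\langle\beta_{\overline{e}_i}e_k,e_\ell\rangle$ proportional to $T$ and $\overline{T}$; the standard normalization is $\frac12$. We will fix these coefficients by checking torsion-freeness, $T^g=0$, and metric compatibility directly on the frame.

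The second step is the linear part. Differentiating $A$ with the Chern connection, where $\nabla e$ is expressed through the Chern connection matrix, gives $(\nabla_{e_i}A)_{\overline{e}_j}$ and $(\nabla_{\overline{e}_j}A)_{e_i}$. Only the $(1,0)$-to-$(1,0)$ block of $A$ contributes to the component paired with $e_k,\overline{e}_\ell$. This yields $T^\ell_{ik,\bar j}+\overline{T^k_{j\ell,\bar i}}$ for Bismut, and half of that for Levi-Civita.

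The third step is the quadratic part. For Bismut, the commutator of the two $(1,0)$-blocks gives $\sum_s\{T^s_{ik}\overline{T^s_{j\ell}}-T^\ell_{is}\overline{T^k_{js}}\}$. For Levi-Civita, the same blocks scaled by $\frac12$ give $\frac14$ of those two terms. In addition, the off-diagonal pieces $\beta$ enter through the compositions $\beta_{e_i}\beta_{\overline{e}_j}$ and $\beta_{\overline{e}_j}\beta_{e_i}$. These send $e_k$ through $T^{0,1}$ and back into $T^{1,0}$, and they account for the extra term $-\frac14\sum_s T^j_{ks}\overline{T^i_{\ell s}}$.

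I expect the main obstacle to be the Levi-Civita case. There one has to get the exact form and coefficient of $\beta$ right, with consistent index placement under the conventions $T^j_{ik}=-T^j_{ki}$ and $R_{xy}z$ paired with $w$. One must also verify that the $\beta$-contributions to the linear part cancel in the $(i\bar j k\bar\ell)$ component, so that no $\nabla\beta$ term survives. To control signs I will first check the formula in dimension $2$ against the known relation between $R^g$ and $R$ for a conformally flat Hermitian metric.
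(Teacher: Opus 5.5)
Your route is the paper's computation in invariant form. The paper writes $\theta^b=\theta+\gamma$, $\theta_1=\theta+\frac12\gamma$, and $\overline{\theta}_2$ for the $J$-anti-linear part of $\nabla^g$. It then expands $d\theta^b-\theta^b\wedge\theta^b$ and $d\theta_1-\theta_1\wedge\theta_1-\overline{\theta}_2\wedge\theta_2$ using $d\varphi=-{}^t\theta\wedge\varphi+\tau$; the $(2,0)$-form $\tau$ plays the role of your vanishing term $A_{T(e_i,\overline{e}_j)}$. Your formula for the curvature of $\nabla+A$ is correct. Your bookkeeping is also right, including that $\nabla\beta$ and the mixed terms between the $J$-linear part and $\beta$ drop out of the $(i\bar j k\bar\ell)$ component, since they map $T^{1,0}$ into $T^{0,1}$.

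One concrete correction: your displayed $A^b$ has the wrong overall sign for the conventions in the statement. From $\gamma_{ij}=\sum_k\{T^j_{ik}\varphi_k-\overline{T^i_{jk}}\overline{\varphi}_k\}$ one gets
$\langle A^b_{e_i}e_k,\overline{e}_\ell\rangle=T^\ell_{ki}=-T^\ell_{ik}$ and $\langle A^b_{\overline{e}_j}e_k,\overline{e}_\ell\rangle=\overline{T^k_{j\ell}}$.

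This is not a matter of convention once $T(e_i,e_k)=\sum_jT^j_{ik}e_j$ is fixed. With your signs, the torsion of $\nabla+A^b$ is $3T(e_i,e_k)$ on $(e_i,e_k)$ and is not totally skew, and $\nabla+\frac12A^b+\beta$ would have torsion $2T(e_i,e_k)$ there. The correct $A^b$ gives $T^b(e_i,e_k)=-T(e_i,e_k)$. Since the linear part is odd in $A$, your stated $A^b$ would produce $-(T^\ell_{ik,\bar j}+\overline{T^k_{j\ell,\bar i}})$, while the quadratic terms are unaffected. The skew-symmetry and torsion-free checks you plan would catch this.

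With the sign fixed, the Levi-Civita part works as you describe:
\begin{itemize}
\item $\beta_{e_i}e_k=0$, by the Koszul formula and integrability.
\item $\beta_{\overline{e}_j}e_k=\frac12\sum_sT^j_{ks}\overline{e}_s$ and $\beta_{e_i}\overline{e}_s=\frac12\sum_m\overline{T^i_{sm}}e_m$.
\item Hence only $\beta_{e_i}\beta_{\overline{e}_j}$ contributes, giving $\frac14\sum_sT^j_{ks}\overline{T^i_{s\ell}}=-\frac14\sum_sT^j_{ks}\overline{T^i_{\ell s}}$, which is exactly the extra term.
\end{itemize}
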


\begin{proof}
Denote by $\varphi$ the coframe dual to $e$, and write $e$ and $\varphi$ as column vectors. Under the frame $e$, denote by $\theta$, $\theta^b$, $\Theta$, $\Theta^b$ the matrices of connection and curvature for the Chern and Bismut connection, respectively. So in matrix form we have
$$\nabla e = \theta \otimes  e, \ \ \ \ \ \nabla^b e = \theta^b \otimes  e, \ \ \ \ \ \Theta = d\theta - \theta \wedge \theta,  \ \ \ \ \ \Theta^b = d\theta^b - \theta^b \wedge \theta^b. $$
Similarly, for the Levi-Civita connection $\nabla^g$ let us write
$$ \nabla^g e = \theta_1 \otimes e + \overline{\theta}_2 \otimes \overline{e}, \ \ \ \ \ \  \Theta_1 = d\theta_1 - \theta_1 \wedge \theta_1 - \overline{\theta}_2 \wedge \theta_2. $$
Then the curvature components of these connections are given by
$$ R_{i\bar{j}k\bar{\ell}}= \Theta_{k\ell} (e_i, \overline{e}_j), \ \ \ \ \ R^b_{i\bar{j}k\bar{\ell}}= \Theta^b_{k\ell} (e_i, \overline{e}_j), \ \ \ \ \ R^g_{i\bar{j}k\bar{\ell}}= (\Theta_1)_{k\ell} (e_i, \overline{e}_j). $$
Write $\gamma=\theta^b-\theta $, then it is  well-known (see for example \cite{YangZ}) that
$$ \gamma_{ij} = \sum_k \big\{ T^j_{ik}\varphi_k - \overline{T^i_{jk} } \overline{\varphi}_k \big\}, \ \ \ \ \theta_1 =\theta + \frac12 \gamma, \ \ \ \ \ (\theta_2)_{ij} = \frac12 \sum_k \overline{T^k_{ij}} \varphi_k, $$
for any $1\leq i,j\leq n$. The structure equation is $d\varphi = - \,^t\!\theta \wedge \varphi + \tau$, where $\tau_j = \frac12 \sum_{i,k}T^j_{ik}\varphi_i\wedge \varphi_k$. From these, a straight-forward computation leads to (\ref{eq:RbR}) and (\ref{eq:RgR}).
\end{proof}

By taking the symmetrization (\ref{eq:sym}) in (\ref{eq:RbR}) and (\ref{eq:RgR}) and using the skew-symmetry of torsion: $T^j_{ik}=-T^j_{ki}$, we get the following
\begin{corollary} \label{cor3}
Under any local unitary frame $e$ of a Hermitian manifold $(M^n,g)$, it holds that
\begin{equation}  \label{eq:Q}
\left\{ \begin{split} \widehat{R^b} = \widehat{R} -Q,    \ \ \ \ \ \widehat{R^g} = \widehat{R} -\frac12 Q, \hspace{2.3cm}\\
  Q = \frac14 \sum_s \big( T^{j}_{is} \overline{ T^k_{\ell s }} + T^{\ell }_{ks} \overline{ T^i_{j s }} + T^{j}_{ks} \overline{ T^i_{\ell s }} + T^{\ell}_{is} \overline{ T^k_{j s }} \big).
  \end{split} \right.
\end{equation}
\end{corollary}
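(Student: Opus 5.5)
The plan is to apply the symmetrization operator \eqref{eq:sym} term by term to the right-hand sides of \eqref{eq:RbR} and \eqref{eq:RgR}. Symmetrization is linear, and the Chern curvature term simply becomes $\widehat{R}$. The structural observation is that $\widehat{P}$ is the average of $P$ over the group generated by the two independent transpositions $i\leftrightarrow k$ and $j\leftrightarrow \ell$. It follows that any term skew-symmetric in $(i,k)$, or skew-symmetric in $(j,\ell)$, symmetrizes to zero. It also follows that two terms which are images of each other under one of these swaps have the same symmetrization.

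First I dispose of the terms that vanish. The term $T^{\ell}_{ik,\bar j}$ is skew in $(i,k)$, since $T^\ell_{ik}=-T^\ell_{ki}$ and Chern covariant differentiation preserves this skew-symmetry. Likewise $\overline{T^k_{j\ell,\bar i}}$ is skew in $(j,\ell)$. The quadratic term $\sum_s T^s_{ik}\overline{T^s_{j\ell}}$ is skew in both pairs. Hence all of these drop out after symmetrization, in both \eqref{eq:RbR} and \eqref{eq:RgR}.

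Next I treat the remaining quadratic term $-\sum_s T^{\ell}_{is}\overline{T^k_{js}}$. Its symmetrization is obtained by applying the four index permutations, which gives
$$-\frac14\sum_s\big(T^{\ell}_{is}\overline{T^k_{js}}+T^{\ell}_{ks}\overline{T^i_{js}}+T^{j}_{is}\overline{T^k_{\ell s}}+T^{j}_{ks}\overline{T^i_{\ell s}}\big).$$
These are exactly the four summands of $Q$, so this equals $-Q$, and $\widehat{R^b}=\widehat R-Q$ follows.

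For the Levi-Civita curvature the quadratic part is $-\frac14\sum_s\big(T^{\ell}_{is}\overline{T^k_{js}}+T^{j}_{ks}\overline{T^i_{\ell s}}\big)$. The second summand is the image of the first under the simultaneous swap $(i\leftrightarrow k,\ j\leftrightarrow\ell)$, so both have symmetrization $Q$, and the total contributes $-\frac14\cdot 2Q=-\frac12 Q$. This gives $\widehat{R^g}=\widehat R-\frac12 Q$.

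There is no real obstacle here. The only care needed is in the index bookkeeping: checking that the four permuted terms match the four summands in the stated $Q$, and confirming that $T^\ell_{ik,\bar j}$ remains skew in $(i,k)$.
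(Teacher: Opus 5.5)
Your proposal is correct and is essentially the paper's own argument: symmetrize the formulas for $R^b$ and $R^g$ in terms of the Chern data term by term, and use the skew-symmetry of the torsion to kill $T^{\ell}_{ik,\bar j}$, $\overline{T^k_{j\ell,\bar i}}$ and $\sum_s T^s_{ik}\overline{T^s_{j\ell}}$. Your index bookkeeping also checks out: the four images of $T^{\ell}_{is}\overline{T^k_{js}}$ are exactly the summands of $Q$, and $T^{j}_{ks}\overline{T^i_{\ell s}}$ is its image under the simultaneous swap, which gives the factor $\frac12$ in the Levi-Civita case.
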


\vspace{0.3cm}

\section{The map $L$ and Bochner-K\"ahler manifolds}

In this section we will recall the basics about Bochner-K\"ahler manifolds. Given a complex manifold $M^n$, let us denote by ${\mathcal H}$ the space of all Hermitian symmetric $(1,1)$ tensors on $M^n$. In a local coordinate neighborhood, an element $P \in {\mathcal H}$ can be expressed as
$$ P = \sum_{i,j=1}^n P_{i\bar{j}} \,dz_i \otimes d\overline{z}_j, \ \ \ \ \ \ P_{j\bar{i}} = \overline{P_{i\bar{j}}}.  $$
The tensor is associated to a real $(1,1)$-form on $M^n$
$$ \phi_P = \sqrt{-1}\sum_{i,j=1}^n P_{i\bar{j}} \,dz_i \wedge d\overline{z}_j,$$
so ${\mathcal H}$ is isomorphic to the space ${\mathcal A}^{1,1}_{\mathbb R}$ of all real $(1,1)$-forms on $M^n$. Now let $(M^n,g)$ be a Hermitian manifold. The Hermitian metric $g$ defines a linear map from ${\mathcal H}$ into the space of $4$-tensors on $M^n$ of type $(2,2)$:
\begin{equation} \label{eq:Ldef}
L_g (P) = P_{i\bar{j}} g_{k\bar{\ell}} +  P_{k\bar{\ell}} g_{i\bar{j}} + P_{i\bar{\ell}} g_{k\bar{j}} + P_{k\bar{j}} g_{i\bar{\ell}}, \ \ \ \ \ 1\leq i,j,k,\ell \leq n,
\end{equation}
where $P_{i\bar{j}}$ and $g_{i\bar{j}}$ are components of $P$ and $g$ under any local frame of type $(1,0)$ tangent vectors. Clearly, for any $P\in {\mathcal H}$ the tensor $L=L_g(P)$ obeys the K\"ahler symmetry
\begin{equation}
 L_{k\bar{j}i\bar{\ell}} = L_{i\bar{j}k\bar{\ell}} , \ \ \ \ \overline{L_{j\bar{i}\ell \bar{k}}} =L_{i\bar{j}k\bar{\ell}}, \ \ \ \ \ \ \forall \ 1\leq i,j,k,\ell \leq n,
 \end{equation}
and we always have
\begin{equation}
L_g(g)=2G_g, \ \ \ \ \ \mbox{where} \ \ G_g = g_{i\bar{j}} g_{k\bar{\ell}} + g_{i\bar{\ell}} g_{k\bar{j}}.
 \end{equation}
The following lemma is due to Huang and Wan \cite[Lemma 3.1]{HW}, which says that the map $L_g$ is always injective.
\begin{lemma}[\cite{HW}] \label{lemma-injective}
Let $(M^n,g)$ be a Hermitian manifold. If $P\in {\mathcal H}$ such that $L_g(P)=0$, then $P=0$. \end{lemma}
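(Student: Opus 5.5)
Since the statement is pointwise and linear in $P$, we fix a point $p\in M$ and work at $p$ with a unitary frame $e$, so $g_{i\bar j}=\delta_{ij}$. Because $P$ is Hermitian symmetric, a unitary change of frame diagonalizes it, $P_{i\bar j}=\lambda_i\delta_{ij}$ with $\lambda_i\in\mathbb R$. The map $L_g$ is tensorial, so the condition $L_g(P)=0$ is frame-independent and still holds in this diagonalizing frame. The plan is to read off enough components of $L_g(P)$ to force every $\lambda_i=0$.

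In the diagonal frame (\ref{eq:Ldef}) becomes
$$L_{i\bar j k\bar\ell}=\lambda_i\delta_{ij}\delta_{k\ell}+\lambda_k\delta_{k\ell}\delta_{ij}+\lambda_i\delta_{i\ell}\delta_{kj}+\lambda_k\delta_{kj}\delta_{i\ell}.$$
Taking $i=j=k=\ell$ gives $L_{i\bar i i\bar i}=4\lambda_i$. Hence $L_g(P)=0$ forces $\lambda_i=0$ for every $i$, so $P=0$ at $p$. Since $p$ was arbitrary, $P=0$ on all of $M^n$.

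We could also argue without diagonalizing. Contracting $L_g(P)=0$ over $k,\ell$ with $g^{k\bar\ell}$ gives
$$(n+2)P_{i\bar j}+(\operatorname{tr}_gP)\,g_{i\bar j}=0.$$
Tracing this again gives $2(n+1)\operatorname{tr}_gP=0$, so $\operatorname{tr}_gP=0$, and then $P=0$ because $n+2\neq 0$. Either route works, and none of the steps is a real obstacle. The one point that needs care is the frame-independence of $L_g$, or equivalently the contraction identity, which follows directly from the definition.
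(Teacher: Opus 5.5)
Your proposal is correct. Your second argument, contracting $L_g(P)=0$ once to get $(n+2)P+\mathrm{tr}_g(P)\,g=0$ and then again to get $(2n+2)\,\mathrm{tr}_g(P)=0$, is exactly the paper's proof.

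Your main argument takes a different route. You diagonalize $P$ at a point in a unitary frame and read off the single component $L_{i\bar i i\bar i}=4\lambda_i$. Invariantly, this is the identity $L_g(P)(X,\overline{X},X,\overline{X})=4P(X,\overline{X})\,|X|^2$. So $L_g(P)=0$ forces the form $P$ to vanish on the diagonal, and hence everywhere. This route is more elementary and fits the theme of the paper: the ``holomorphic sectional'' components of $L_g(P)$ alone already determine $P$. It uses the Hermitian symmetry of $P$ through the diagonalization, though polarization of sesquilinear forms would remove that need.

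The contraction route needs no diagonalization and works verbatim for any $(1,1)$ tensor. More usefully for the paper, it produces the explicit trace identities $\mathrm{Ric}=(n+2)P+\mathrm{tr}_g(P)\,g$ and $s=2(n+1)\,\mathrm{tr}_g(P)$ for $R=L_g(P)$. These are reused immediately in the proof of Lemma~\ref{lemma3} to identify $P$ with $\frac{s}{2n(n+1)}g+\frac{1}{n+2}r$.
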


\begin{proof}
Denote by $(g^{\bar{j}i})$ the inverse matrix of $(g_{i\bar{j}})$ under any local frame $e$ of type $(1,0)$ vectors. Then we have
$$ \sum_{k,\ell} g^{\bar{\ell}k} \big(L_g(P)\big)_{i\bar{j}k\bar{\ell}} = (n+2)P_{i\bar{j}} + \mbox{tr}_g(P)\,g_{i\bar{j}}, \ \ \,  \sum_{i,j,k,\ell} g^{\bar{j}i}g^{\bar{\ell}k} \big(L_g(P)\big)_{i\bar{j}k\bar{\ell}} = (2n+2)\,\mbox{tr}_g(P).$$
So when $L_g(P)=0$, we have $\mbox{tr}_g(P)=0$ by the second equality in the above line. Plug it into the first equality, we get $P=0$.
\end{proof}

Now suppose that $(M^n,g)$ is a K\"ahler manifold. The K\"ahlerness implies that the Chern, Levi-Civita, and Bismut connections all coincide.
Let $e=\{ e_1, \ldots , e_n\}$ be a local frame of type $(1,0)$ complex tangent vector fields in $M^n$. Denote by $g_{i\bar{j}}=g(e_i, \overline{e}_j)$ and $R_{i\bar{j}k\bar{\ell}}$ the components of the metric and the curvature under the frame. The {\em traceless Ricci} tensor of $g$ is defined by
$$  r_{i\bar{j}} = Ric_{i\bar{j}} - \frac{s}{n}g_{i\bar{j}}, $$
where
$$ Ric_{i\bar{j}} = \sum_{k,\ell} g^{\bar{\ell }k} R_{i\bar{j}k\bar{\ell}}, \ \ \ s=\sum_{i,j} g^{\bar{j}i} Ric_{i\bar{j}}. $$
Note that $s$ is the trace of the Ricci tensor $Ric$, and it equals to half of the Riemannian scalar curvature.
As it is well-known, the curvature tensor of the K\"ahler metric $g$ can be decomposed in the following way
\begin{equation} \label{eq:decom}
R_{i\bar{j}k\bar{\ell}} =  \frac{s}{n(n+1)} (G_g)_{i\bar{j}k\bar{\ell}} + \frac{1}{n+2} (L_g(r))_{i\bar{j}k\bar{\ell}} + B_{i\bar{j}k\bar{\ell}} ,
\end{equation}
In other words, one can define a $4$-tensor $B$ by the above equation, which is called the {\em Bochner curvature} of $g$, and $g$ is said to be {\em Bochner-K\"ahler} or {\em Bochner-flat} if $B=0$. From the algebraic point of view, it is relatively easy to detect if a given K\"ahler metric will be Bochner-K\"ahler or not by the following lemma, which is  well-known and we include the proof here for readers' convenience.

\begin{lemma} \label{lemma3}
A K\"ahler manifold $(M^n,g)$ is Bochner-K\"ahler if and only if there exists a Hermitian symmetric $(1,1)$ tensor $P\in {\mathcal H}$ on $M^n$ such that $R=L_g(P)$.
\end{lemma}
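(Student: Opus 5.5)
One direction is immediate from the decomposition (\ref{eq:decom}) and the linearity of $L_g$. If $B=0$, then
$$R=\frac{s}{n(n+1)}G_g+\frac{1}{n+2}L_g(r).$$
Since $L_g(g)=2G_g$, we have $G_g=\frac12 L_g(g)$, and therefore
$$R=L_g\Big(\frac{s}{2n(n+1)}\,g+\frac{1}{n+2}\,r\Big).$$
The argument of $L_g$ is a Hermitian symmetric $(1,1)$ tensor, because $g$, $r$ and the real function $s$ all are. So $R=L_g(P)$ with $P\in{\mathcal H}$.

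For the converse, suppose $R=L_g(P)$ for some $P\in{\mathcal H}$. I will compute the Ricci tensor and the scalar curvature by taking traces, exactly as in the proof of Lemma \ref{lemma-injective}. Contracting over $k,\ell$ gives
$$Ric=(n+2)P+\mathrm{tr}_g(P)\,g,$$
and contracting fully gives
$$s=(2n+2)\,\mathrm{tr}_g(P).$$
Solving these, $\mathrm{tr}_g(P)=\frac{s}{2(n+1)}$ and
$$P=\frac{1}{n+2}\Big(Ric-\frac{s}{2(n+1)}g\Big).$$
Now substitute $Ric=r+\frac{s}{n}g$, so that
$$P=\frac{1}{n+2}\,r+\frac{s}{n+2}\Big(\frac1n-\frac{1}{2(n+1)}\Big)g.$$
The coefficient simplifies as
$$\frac1n-\frac{1}{2(n+1)}=\frac{n+2}{2n(n+1)},$$
hence $P=\frac{1}{n+2}r+\frac{s}{2n(n+1)}g$. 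Applying $L_g$ and again using $L_g(g)=2G_g$ gives
$$R=\frac{s}{n(n+1)}G_g+\frac{1}{n+2}L_g(r).$$
Comparing with (\ref{eq:decom}) forces $B=0$.

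Uniqueness of $P$ is not actually needed in this argument, since $P$ is recovered explicitly from the traces, but it also follows from Lemma \ref{lemma-injective}. There is no real obstacle here. The only step needing care is the arithmetic of the trace identities, namely that the $g$-coefficient comes out to exactly $\frac{s}{2n(n+1)}$ so that it matches $\frac{s}{n(n+1)}G_g$.
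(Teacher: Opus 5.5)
Your proof is correct and follows essentially the same route as the paper: the forward direction uses $L_g(g)=2G_g$, and the converse takes the two traces to get $Ric=(n+2)P+\mathrm{tr}_g(P)\,g$ and $s=2(n+1)\,\mathrm{tr}_g(P)$, then checks that $R$ matches the decomposition (\ref{eq:decom}) with $B=0$. The only cosmetic difference is that you solve for $P$ in terms of $r$ and $s$, whereas the paper solves for $r$ and $s$ in terms of $P$; your arithmetic for the coefficient $\frac{s}{2n(n+1)}$ is correct.
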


 \begin{proof}
If $g$ is Bochner-K\"ahler, then $B=0$, so by (\ref{eq:decom}) and $L_g(g)=2G_g$ we get
\begin{equation} \label{eq:temp}
 R = L_g\big(\frac{s}{2n(n+1)}g + \frac{1}{n+2}r\big).
 \end{equation}
Conversely, if $R=L_g(P)$ for some $P\in {\mathcal H}$, then we have
$$ Ric = (n+2)P + \mbox{tr}_g(P)\,g, \ \ \ s =2(n+1) \,\mbox{tr}_g(P).$$
From this, we get
$$ \frac{1}{n+2}r= P - \frac{1}{n}\mbox{tr}_g(P)\,g , \ \ \ \frac{s}{n(n+1)} = \frac{2}{n} \,\mbox{tr}_g(P).  $$
Hence (\ref{eq:temp}) holds, and $B=0$ by (\ref{eq:decom}), so $g$ is Bochner-K\"ahler by definition. This completes the proof of the lemma.
 \end{proof}

Denote by $M^n_{\kappa}$  the complete, simply-connected K\"ahler manifold with constant holomorphic sectional curvature $\kappa$. By a theorem of Kamishima \cite{Kamishima1994} (see also the excellently written paper by Bryant \cite{Bryant} which gives a systematic treatment on Bochner-K\"ahler metrics), complete Bochner-K\"ahler manifolds are always symmetric:
\begin{theorem}[\cite{Kamishima1994, Bryant}] \label{thm4}
Let $(M^n,g)$ be a compact Bochner-K\"ahler manifold. Then its universal covering space is holomphically isometric to $M^p_{\kappa}\times M^{n-p}_{-\kappa}$ for some constant $\kappa$ and some  integer $p$ with $1\leq p\leq n$. In particular, $g$ is locally symmetric.
\end{theorem}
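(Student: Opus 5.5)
This is Kamishima's theorem, cited from the literature rather than proved in the paper; the argument I would follow is essentially Bryant's. The plan has three steps: show that a Bochner-K\"ahler metric has parallel curvature (at least when compact), read off the local model from the holonomy, and then pass to the universal cover.

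\textbf{Step 1: structure equations.} By Lemma \ref{lemma3}, Bochner-flatness means $R=L_g(P)$ with $P=\frac{s}{2n(n+1)}g+\frac{1}{n+2}r$. Combining the second Bianchi identity with the injectivity of $L_g$ (Lemma \ref{lemma-injective}), I would first derive that $P$ is a Codazzi-type tensor: $P_{i\bar j,k}$ is symmetric in $i,k$. Taking the trace gives $\nabla \mathrm{tr}_g P$ in terms of $\mathrm{div}\,P$. Differentiating once more and applying the Ricci identity, with $R=L_g(P)$ substituted, yields a closed system for $(P,\nabla P)$. In Bryant's formulation this becomes a finite-type prolongation: the Bochner-K\"ahler structure is encoded by a $\mathfrak{su}(n+1,1)$-valued Cartan connection. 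That connection is flat, which gives a developing map into a homogeneous model, essentially the symmetric-space models together with the cases where $\nabla P\neq 0$.

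\textbf{Step 2: compactness forces $\nabla P=0$.} This is the step I expect to be the main obstacle. In the prolonged system, the quantities $(P,\nabla \mathrm{tr}P)$ satisfy ODE-like equations along $M$. In particular there is a real function, built from $\mathrm{tr}_g P$ and $|\nabla \mathrm{tr}_gP|^2$, whose Hessian is a combination of $g$ and $P$. I would try to show that $\mathrm{tr}_g P$ is a potential for a holomorphic (Killing-type) vector field and satisfies an identity of the form $\Delta f = -c|\nabla f|^2+\dots$. On a compact manifold, integrating such an identity, or invoking the maximum principle, forces $\nabla \mathrm{tr}_gP=0$. The Codazzi property then gives $\nabla P=0$. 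If this direct approach fails, the fallback is Bryant's classification: run through the orbits of the model group and check that only the parallel ones admit compact quotients.

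\textbf{Step 3: conclusion.} Once $\nabla P=0$, we have $\nabla R=0$, so $g$ is locally symmetric. Parallel $P$ splits $TM$ into eigenbundles, and de Rham decomposition applies. Plugging $R=L_g(P)$ into each factor shows that, on each irreducible factor, $P$ has a single eigenvalue. Moreover, on a product the mixed curvature terms $P_{i\bar j}g_{k\bar\ell}$ must vanish, which allows at most two factors, with eigenvalues $\lambda$ and $\mu$ satisfying $\lambda+\mu=0$. Hence each factor has constant holomorphic sectional curvature, namely $\kappa$ on one factor and $-\kappa$ on the other. Completeness and simple connectivity of the universal cover then identify it with $M^p_\kappa\times M^{n-p}_{-\kappa}$.
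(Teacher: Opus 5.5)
The paper gives no proof of this statement; it quotes it from Kamishima and Bryant. Your Step 3 is fine, and it is the same eigenvalue argument the paper uses in the Claim inside Proposition \ref{prop2}. The genuine gap is Step 2, which is the entire content of the theorem. The mechanism you propose for it cannot work.

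\textbf{Why Step 2 fails.} Your plan uses only the local identities coming from $B=0$, plus compactness through integration by parts or the maximum principle. Such an argument goes through verbatim on compact K\"ahler orbifolds: work in uniformizing charts, where Stokes and the maximum principle still hold. Yet Bryant exhibits compact Bochner--K\"ahler orbifolds, e.g.\ on weighted projective spaces, that are not locally symmetric. So no argument of this kind can force $d\,\mathrm{tr}_gP=0$.

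The Killing-potential fact you mention is correct: for $n\ge 2$ the Bianchi identity gives $t_{,ik}=0$, where $t=\mathrm{tr}_gP$. But this only says $g$ is extremal, and Calabi constructed compact extremal metrics with non-constant scalar curvature.

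What is needed is a global input that sees the smooth manifold structure. Kamishima gets it by passing to the associated spherical CR structure and analysing its developing map into $S^{2n+1}$, with holonomy in $PU(n+1,1)$. Bryant gets it from a canonical infinitesimal torus action and its momentum map, determining which local models close up smoothly across the degenerate orbits.

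Your fallback (``check which orbits admit compact quotients'') misdescribes the situation. Non-symmetric Bochner--K\"ahler metrics are not locally homogeneous; they have local cohomogeneity up to $n$. So there is no homogeneous model to quotient. The real obstruction is regularity at the singular orbits, which is exactly where manifolds and orbifolds differ.

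\textbf{A smaller, repairable error in Step 1.} $P$ is not Codazzi. Substituting $R=L_g(P)$ into $R_{i\bar jk\bar\ell,m}=R_{m\bar jk\bar\ell,i}$ and solving the resulting linear system gives
$P_{i\bar j,k}=\frac{1}{n+2}\big(t_k g_{i\bar j}+2t_i g_{k\bar j}\big)$.
Hence $P_{i\bar j,k}-P_{k\bar j,i}=\frac{1}{n+2}\big(t_i g_{k\bar j}-t_k g_{i\bar j}\big)$, which is nonzero wherever $dt\neq 0$. It is $Ric=(n+2)P+t\,g$ that is Codazzi.

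Also, ``Codazzi plus constant trace'' would not by itself give $\nabla P=0$. With the correct formula, however, $dt=0$ yields $\nabla P=0$ immediately. So your Step 3 goes through once constancy of the scalar curvature has been established by one of the global arguments above.
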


\vspace{0.3cm}

\section{Conformal change of Hermitian metrics}

First let us recall some basic relations between the torsion and curvature of conformally related Hermitian metrics. Suppose that $(M^n,g)$ is a Hermitian manifold, and $h =e^{2u}g$ where $u \in C^{\infty}(M)$ is a real-valued smooth function. Denote by $T$ and $R$ the torsion and curvature of the Chern connection of $g$, and by $T^h$, $R^h$ the torsion and curvature of the Chern connection of $h$. Let $e$ be a local unitary frame for $g$ with dual coframe $\varphi$. As before, write
$$T(e_i, e_k) = \sum_j T^j_{ik}e_j, \ \ \ \mbox{and} \ \  R_{i\bar{j}k\bar{\ell}}=R(e_i,\overline{e}_j, e_k, \overline{e}_{\ell}) $$
for the components of $T$ and $R$ under $e$. For each $1\leq i\leq n$,   write $\varepsilon_i=e^{-u}e_i$, $\psi_i=e^u\varphi_i$. Then $\varepsilon$ and $\psi$ are local unitary frame and dual coframe for $h$. Let us denote by $(T^h)^j_{ik}$, $R^h_{i\bar{j}k\bar{\ell}}$ the torsion and curvature components for the Chern connection of $h$ under the unitary frame $\varepsilon$, then we have

\begin{lemma}
Under any unitary frame $e$ for $g$ and the corresponding unitary frame $\varepsilon=e^{-u}e$ for the conformal metric $h=e^{2u}g$, their Chern torsion and curvature components are related by
\begin{eqnarray}
&& R^h_{i\bar{j}k\bar{\ell}} = e^{-2u} \big( R_{i\bar{j}k\bar{\ell}} - 2u_{i\bar{j}}  \delta_{k\ell} \big),  \label{eq:RhR}\\
&& (T^h)^j_{ik} = e^{-u} \big( T^j_{ik} + 2u_i\delta_{jk} -2 u_k \delta_{ji} \big),  \label{eq:ThT}
\end{eqnarray}
for any $1\leq i,j,k, \ell \leq n$. Here $u_i= (\partial u)(e_i)=e_i(u)$, and $u_{i\bar{j}}= (\partial \overline{\partial } u)(e_i, \overline{e}_j)$.
\end{lemma}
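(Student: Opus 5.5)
The plan is to compute directly with moving frames, using the structure equations recalled in the proof of the previous lemma. Let $\theta$ be the Chern connection matrix of $g$ under $e$, so that $d\varphi = -\,^t\!\theta\wedge\varphi+\tau$, where $\tau_j=\frac12\sum_{i,k}T^j_{ik}\varphi_i\wedge\varphi_k$. Set $\psi=e^u\varphi$. Then
$$d\psi = du\wedge\psi + e^u\big(-\,^t\!\theta\wedge\varphi+\tau\big) = -\,^t\!\theta\wedge\psi + du\wedge\psi + e^{-u}\tau^\psi,$$
where $\tau^\psi_j=\frac12\sum T^j_{ik}\psi_i\wedge\psi_k$. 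The Chern connection matrix $\theta^h$ of $h$ under $\varepsilon$ is characterized by two conditions: it is skew-Hermitian (because $\varepsilon$ is $h$-unitary), and the $(0,2)$ and $(1,1)$ parts of $d\psi+\,^t\!\theta^h\wedge\psi$ vanish (torsion of pure type $(2,0)$).

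\textbf{Step 1: the connection form.} The candidate is
$$\theta^h=\theta+(\partial u-\overline{\partial}u)\,I.$$
This is skew-Hermitian. It also gives
$$d\psi+\,^t\!\theta^h\wedge\psi = du\wedge\psi + (\partial u-\overline\partial u)\wedge\psi + e^{-u}\tau^\psi = 2\,\partial u\wedge\psi + e^{-u}\tau^\psi,$$
which is of type $(2,0)$. By uniqueness of the Chern connection, this $\theta^h$ is the right one.

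\textbf{Step 2: the torsion.} Write the $j$-th component of the torsion as $\frac12\sum (T^h)^j_{ik}\psi_i\wedge\psi_k$. Since $\partial u=\sum_i u_i\varphi_i=e^{-u}\sum_i u_i\psi_i$, we get $2\,\partial u\wedge\psi_j=2e^{-u}\sum_i u_i\psi_i\wedge\psi_j$. Skew-symmetrize this in $(i,k)$ with $\psi_k=\psi_j$, so that the term corresponds to $\frac12\sum_{i,k}2(u_i\delta_{jk}-u_k\delta_{ji})\psi_i\wedge\psi_k$. Adding the $\tau$ part gives (\ref{eq:ThT}).

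\textbf{Step 3: the curvature.} We have
$$\Theta^h=d\theta^h-\theta^h\wedge\theta^h=\Theta + d(\partial u-\overline\partial u)I,$$
because the scalar term commutes with $\theta$ and its square vanishes. Moreover
$$d(\partial u-\overline\partial u)=\overline\partial\partial u-\partial\overline\partial u=-2\partial\overline\partial u,$$
so $\Theta^h_{k\ell}=\Theta_{k\ell}-2\partial\overline\partial u\,\delta_{k\ell}$. Evaluating on $(\varepsilon_i,\overline{\varepsilon}_j)=e^{-2u}(e_i,\overline e_j)$ gives (\ref{eq:RhR}).

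The main obstacle is sign and convention bookkeeping. One has to check that the paper's conventions ($\nabla e=\theta\otimes e$, structure equation with $-\,^t\!\theta$, curvature index placement $R_{i\bar jk\bar\ell}=\Theta_{k\ell}(e_i,\overline e_j)$) produce the stated factor $2$ and the stated signs. Step 1 is where this matters most, so I would verify $\theta^h$ first by checking the type decomposition explicitly.
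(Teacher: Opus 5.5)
Your moving-frame computation is correct and is the ``straightforward computation'' the paper defers to (it gives no proof and cites \S 5 of Yang--Zheng): $\theta^h=\theta+(\partial u-\overline{\partial}u)I$, $\Theta^h=\Theta-2\,\partial\overline{\partial}u\,I$, and torsion form $2\,\partial u\wedge\psi+e^{-u}\tau^\psi$, with signs and factors matching the paper's conventions, including the Kähler specialization $T^j_{ik}=2u_k\delta_{ji}-2u_i\delta_{jk}$. The only slip is cosmetic: in Step 2 the intermediate expression should read $\frac12\sum_{i,k}2e^{-u}(u_i\delta_{jk}-u_k\delta_{ji})\psi_i\wedge\psi_k$, since the factor $e^{-u}$ is what produces the overall $e^{-u}$ in the torsion formula.
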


\begin{proof}
This is a straightforward computation. See for instance the discussion in \S 5 of \cite{YangZ}.
\end{proof}

In particular, if in the above the metric $h$ is K\"ahler, then $T^h=0$, so by (\ref{eq:ThT}) we have
\begin{equation}  \label{eq:T2}
T^j_{ik} =  2 u_k \delta_{ji} -2u_i\delta_{jk},
\end{equation}
then by (\ref{eq:Q}) and a straightforward computation we get
\begin{equation}  \label{eq:Q2}
Q =  |\nabla^g u|^2 \,G - L_g(\xi), \ \ \ \ \ \ \xi_{i\bar{j}} = u_i \overline{u_j} =(\partial u\wedge \overline{\partial}u)(e_i,\overline{e}_j).
\end{equation}
Here $\nabla^g$ is the Levi-Civita connection of $g$, and $|\nabla^g u|^2=2\sum_{s=1}^n |u_s|^2$.

An important observation in \cite{HW} is that, if a Hermitian metric $g$ is  conformal to a  K\"ahler metric $h$, and $g$ has constant Chern holomorphic sectional curvature, then $h$ is Bochner-K\"ahler. Our main  observation in this article is the following analogous statement:

\begin{proposition}  \label{prop1}
Let $(M^n,g)$ be a Hermitian manifold, and $h=e^{2u}g$ where $u\in C^{\infty}(M)$ is a real-valued smooth function. If $h$ is K\"ahler, and $g$ has constant Chern (or Levi-Civita, or Bismut) holomorphic sectional curvature, then $h$ is Bochner-K\"ahler.
\end{proposition}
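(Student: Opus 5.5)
Since the Bochner-K\"ahler property is a pointwise algebraic condition on the curvature of $h$, the plan is to show directly that $R^h = L_h(P)$ for an explicit Hermitian symmetric tensor $P$, and then invoke Lemma \ref{lemma3}. We work at a point with a $g$-unitary frame $e$ and the corresponding $h$-unitary frame $\varepsilon = e^{-u}e$.

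\emph{Step 1 (the hypothesis in symmetrized form).} By Lemma \ref{lemma1}, constancy of the Chern, Levi-Civita or Bismut holomorphic sectional curvature of $g$ with constant $c$ means $\widehat{R^D} = \frac{c}{2} G_g$ for $D$ the Chern, Levi-Civita or Bismut connection of $g$. Corollary \ref{cor3} gives $\widehat{R^D} = \widehat{R} - tQ$ with $t = 0, \frac12, 1$ in the three cases. Since $h$ is K\"ahler, the torsion of $g$ is given by (\ref{eq:T2}), so (\ref{eq:Q2}) holds. In all three cases we therefore get
$$\widehat{R} = \frac{c}{2}G + t\big(|\nabla^g u|^2 G - L_g(\xi)\big).$$
Using $G = \frac12 L_g(g)$, this becomes $\widehat{R} = L_g(P_1)$ with
$$P_1 = \big(\tfrac{c}{4} + \tfrac{t}{2}|\nabla^g u|^2\big) g - t\,\xi,$$
which lies in $\mathcal{H}$.

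\emph{Step 2 (transfer to $h$).} By (\ref{eq:RhR}), $R^h_{i\bar jk\bar\ell} = e^{-2u}\big(R_{i\bar jk\bar\ell} - 2u_{i\bar j}\delta_{k\ell}\big)$. Since $h$ is K\"ahler, $R^h$ already has K\"ahler symmetry, so $R^h = \widehat{R^h}$. Symmetrizing the right-hand side, the term $u_{i\bar j}\delta_{k\ell}$ becomes $\frac14 L_g(\partial\bar\partial u)$ in $g$-unitary components, where $\partial\bar\partial u$ denotes the Hermitian tensor with components $u_{i\bar j}$. Hence, in $e$-components,
$$R^h = e^{-2u}\, L_g\big(P_1 - \tfrac12 \partial\bar\partial u\big).$$

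\emph{Step 3 (rewrite in terms of $h$).} The frame $\varepsilon$ is $h$-unitary, and the components of $L_g(P)$ in $e$ equal $\delta$-expressions in $P$. Therefore, with respect to $\varepsilon$, the identity from Step 2 reads $R^h = L_h(P)$ with $P = e^{-2u}\big(P_1 - \frac12\partial\bar\partial u\big)$ written in the frame $e$. This is equivalent to a Hermitian symmetric tensor $\tilde P \in \mathcal{H}$ (a global $(1,1)$ tensor, up to the appropriate conformal weight). Lemma \ref{lemma3} then shows that $h$ is Bochner-K\"ahler.

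\emph{Main obstacle.} The only real care needed is in Step 2: justifying that the non-symmetric Chern curvature $R$ of $g$ may be replaced by $\widehat R$. This holds because $R^h$ is symmetric, so symmetrizing both sides of (\ref{eq:RhR}) is legitimate. One must also track the factors of $e^{-2u}$ and the frame change consistently, so that the final tensor is genuinely $L_h$ of an element of $\mathcal{H}$. Verifying (\ref{eq:Q2}) itself is routine and is taken as given.
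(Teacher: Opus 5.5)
Your proposal is correct and is essentially the paper's proof: you use Lemma~\ref{lemma1}, Corollary~\ref{cor3} and (\ref{eq:Q2}) to write $\widehat{R}$ as $L_g$ of a Hermitian tensor, symmetrize (\ref{eq:RhR}) using $R^h=\widehat{R^h}$, and conclude with Lemma~\ref{lemma3}. One small remark on Step~3: since $\varepsilon=e^{-u}e$, no extra conformal weight appears, and the tensor is exactly $P_1-\frac12\partial\overline{\partial}u$, which for $t=0,\frac12,1$ coincides with the paper's $A$, $A^g$, $A^b$ in (\ref{eq:Ac}), (\ref{eq:Ag}), (\ref{eq:Ab}).
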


\begin{proof}
If $g$ has constant Chern holomorphic sectional curvature, $H_g=c$, then by (\ref{eq:H=c}) in Lemma \ref{lemma1}, we have $\widehat{R}=\frac{c}{2}G_g$. Since $h$ is K\"ahler, its curvature $R^h$ will obey all K\"ahler symmetries, so $R^h=\widehat{R^h}$. Now by taking the symmetrization in (\ref{eq:RhR}),  we obtain
$$ R^h_{i\bar{j}k\bar{\ell}} = e^{-2u}\frac{c}{2}(\delta_{ij}\delta_{k\ell}+\delta_{i\ell}\delta_{kj} ) -\frac12 \big( \phi_{i\bar{j}} \delta_{k\ell} + \phi_{k\bar{\ell}} \delta_{ij} + \phi_{i\bar{\ell }} \delta_{kj} + \phi_{k\bar{j}} \delta_{i\ell} \big),  $$
where $\phi_{i\bar{j}} = (\partial \overline{\partial}u)(\varepsilon_i, \overline{\varepsilon}_j)$, so $\phi$ is the Hermitian symmetric $(1,1)$ tensor corresponding to the real $(1,1)$-form $\sqrt{-1}\partial \overline{\partial}u$. That is,
\begin{equation} \label{eq:Ac}
H_g=c \ \ \ \Longrightarrow \ \ \ R^h=L_h(A), \ \ A=\frac{c}{4}e^{-2u}h - \frac12 \phi.
\end{equation}
In particular, by Lemma \ref{lemma3} we know that $h$ is Bochner-K\"ahler.

Next let us assume that $g$ has constant Bismut holomorphic sectional curvature, namely, $H^b_g=c$. Then by Lemma \ref{lemma1} we have $\widehat{R^b}=\frac{c}{2}G_g$. By (\ref{eq:Q}) in Corollary \ref{cor3} and (\ref{eq:Q2}), we obtain
$$ \widehat{R} = \widehat{R^b} + Q = \frac{c}{2}G + |\nabla^g u|^2 G - L_g(\xi), $$
where $\xi$ is the Hermitian symmetric $(1,1)$ tensor corresponding to  $\sqrt{-1}\partial u\wedge \overline{\partial}u$. By taking the symmetrization in (\ref{eq:RhR}) again  we obtain
\begin{equation}  \label{eq:Ab}
H_g^b=c \ \ \ \Longrightarrow \ \ \ R^h=L_h(A^b), \ \ A^b=\big(\frac{c}{4}e^{-2u}+\frac12 |\nabla^hu|^2\big)h - \frac12 \phi -\xi.
\end{equation}
Again by  Lemma \ref{lemma3} we know that $h$ is Bochner-K\"ahler in this case. Similarly, if the metric $g$ has constant Levi-Civita holomorphic sectional curvature $H_g^g=c$, then the same argument indicates that $h$ is Bochner-K\"ahler, and in this case we have
\begin{equation} \label{eq:Ag}
H_g^g=c \ \ \ \Longrightarrow \ \ \ R^h=L_h(A^g), \ \ A^g=\big(\frac{c}{4}e^{-2u}+\frac14 |\nabla^hu|^2\big)h - \frac12 \phi -\frac12 \xi.
\end{equation}
This completes the proof of Proposition \ref{prop1}.
\end{proof}

On the K\"ahler manifold $(M^n,h)$, let us denote by $\chi_{\beta}$ the complex Hessian of a function $\beta$. In the Bismut case, let us take $f=e^{2u}$. Because
$$ \partial \overline{\partial}f = 2e^{2u}\partial \overline{\partial}u + 4e^{2u}\partial u\wedge \overline{\partial}u, $$
 we get $\chi_f = 4f (\frac12 \phi + \xi)$. Also, $|\nabla^hu|^2 = 2|\partial u|^2 = \frac{1}{2f^2}|\partial f|^2$, therefore we obtain
\begin{equation} \label{eq:Ab2}
4fA^b = \alpha h - \chi_f,  \ \ \ \ \alpha = c+\frac{1}{f}|\partial f|^2.
\end{equation}

Similarly, in the Levi-Civita case, by letting $w=e^u$, we have $\chi_w = w(\phi + \xi)$, and

\begin{equation} \label{eq:Ag2}
2wA^g = \beta h - \chi_w,  \ \ \ \ \beta = \frac{c}{2w}+\frac{1}{w}|\partial w|^2.
\end{equation}

\vspace{0.3cm}

\section{Globally conformally K\"ahler manifolds}

In this section, let us prove Theorems \ref{thm1} and \ref{thm2} in the case when the  manifold is globally conformally K\"ahler, as a generalization to Proposition 4.1 of \cite{HW}.

\begin{proposition} \label{prop2}
Let $(M^n,g)$ be a compact Hermitian manifold, such that $h=e^{2u}g$ is K\"ahler for some smooth real-valued function $u\in C^{\infty}(M)$. If $g$ has constant Chern or Bismut or Levi-Civita holomorphic sectional curvature. Then $g$ is K\"ahler, hence $(M^n,g)$ is a complex space form.
\end{proposition}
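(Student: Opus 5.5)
The plan is to combine Proposition \ref{prop1} with Theorem \ref{thm4} and the injectivity Lemma \ref{lemma-injective}. This turns the curvature condition into an overdetermined PDE for the conformal factor. Compactness should then force the factor to be constant, so $g$ is a constant multiple of $h$ and hence K\"ahler with constant holomorphic sectional curvature.

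\emph{Step 1 (the tensor $A$ is parallel).} By Proposition \ref{prop1}, $h$ is Bochner-K\"ahler on the compact manifold $M$, so Theorem \ref{thm4} shows that $h$ is locally symmetric. Locally $(M,h)$ is a product $M^p_\kappa\times M^{n-p}_{-\kappa}$, with K\"ahler forms $\omega=\omega_1+\omega_2$. In particular the Ricci tensor and the scalar curvature of $h$ are parallel. By Lemma \ref{lemma-injective} the tensor $P$ with $R^h=L_h(P)$ is unique, namely $P=\frac{s}{2n(n+1)}h+\frac1{n+2}r$, and this $P$ is parallel. Locally we may therefore write $P=a_1h_1+a_2h_2$ with constants $a_1,a_2$ (with $a_1=a_2$ when $p=n$). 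Comparing with (\ref{eq:Ac}), (\ref{eq:Ab2}) and (\ref{eq:Ag2}) gives, in the three cases,
$$\phi=\tfrac{c}{2}e^{-2u}h-2P,\qquad \chi_f=\alpha h-4fP,\qquad \chi_w=\beta h-2wP .$$

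\emph{Step 2 (closedness of the left-hand sides).} Each left-hand side corresponds to a $d$-closed real $(1,1)$-form, namely $\sqrt{-1}\partial\overline\partial u$, $\sqrt{-1}\partial\overline\partial f$ or $\sqrt{-1}\partial\overline\partial w$. Take the Chern case first. Applying $d$ gives $-c\,e^{-2u}du\wedge\omega=0$. Since $n\ge 2$, wedging with $\omega$ is injective on $1$-forms, so $c\,du=0$.
\begin{itemize}
\item If $c\neq0$, then $u$ is constant.
\item If $c=0$, then $\partial\overline\partial u=-2P$ is parallel, so $\Delta_h u$ is constant. Integrating over $M$ shows this constant is $0$, hence $u$ is harmonic and therefore constant.
\end{itemize}

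For the Bismut case, $d$ gives
$$(d\alpha-4a_1df)\wedge\omega_1+(d\alpha-4a_2df)\wedge\omega_2=0.$$
Split every $1$-form into its components along the two factors and compare the $(3,0),(2,1),(1,2),(0,3)$-type pieces with respect to the product. This yields that $\mu_1:=\alpha-4a_1f$ has differential only along factor $1$, and $\mu_2:=\alpha-4a_2f$ has differential only along factor $2$. If $a_1=a_2$ (including $p=n$), the same argument as in the Chern case shows that $\mu=\alpha-4af$ is constant. Then $\Delta_hf=n\mu$, and integration gives $\mu=0$, so $f$ is harmonic and hence constant. The Levi-Civita case is identical with $w$, $\beta$ and $2P$ in place of $f$, $\alpha$ and $4P$.

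\emph{Step 3 (the main obstacle: the genuinely reducible case $a_1\ne a_2$).} Here $\mu_1-\mu_2=4(a_2-a_1)f$, so locally $f=f_1(x)+f_2(y)$ splits along the two factors. Moreover $\chi_{f_i}=\mu_ih_i$ on each factor, with $\mu_i$ depending only on that factor. This is an Obata--Tashiro type equation: the gradient of $f_i$ is a holomorphic, conformal-type vector field on a complex space form. I would first try to rule out a nonconstant solution by a global argument on compact $M$: integrate $\Delta_hf=p\mu_1+(n-p)\mu_2$ and the partial traces against suitable powers of $f$. Alternatively, I would use the explicit form $\alpha=c+|\partial f|^2/f$ to derive a relation $|\partial f|^2=F(f)$, and then apply the maximum principle at the extrema of $f$ on $M$. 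This follows the pattern of Proposition 4.1 of \cite{HW}. I expect this case analysis, especially pinning down $\alpha$ in terms of $f$ on each factor, to be the delicate step. Once $f$ (respectively $w$) is constant, $g$ is homothetic to $h$, so it is K\"ahler with constant holomorphic sectional curvature, i.e.\ a complex space form.
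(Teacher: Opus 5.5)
Your Steps 1 and 2 are sound. In the Chern case your closedness argument is a complete, self-contained proof: applying $d$ to $\sqrt{-1}\partial\overline\partial u=\frac c2e^{-2u}\omega_h-2\omega_P$ gives $c\,du=0$, and then $\Delta_hu\equiv 0$. The paper simply quotes this case from Huang--Wan.

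\textbf{The gap is Step 3.} In the Bismut and Levi-Civita cases, the reducible situation $a_1\neq a_2$ is only a list of strategies to try. It is also exactly where your type decomposition stops giving information. If both factors have complex dimension at least $2$, Lefschetz on each factor forces $d\mu_1=d\mu_2=0$, so $f$ is constant and $P$ is a multiple of $h$, contradicting $a_1\neq a_2$; you could have said this. But if one factor is a curve (e.g.\ always when $n=2$), the corresponding piece $d\mu_i'\wedge\omega_i$ vanishes identically and nothing follows. The Obata-type analysis and the relation $|\partial f|^2=F(f)$ you propose are neither carried out nor needed.

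\textbf{Two ingredients are missing.}

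First, the eigenvalues of $P$ have opposite signs. For unit $X\in E_{a_1}$ and $Y\in E_{a_2}$, the mixed curvature of the local product vanishes, so
$0=R^h_{X\overline XY\overline Y}=\big(L_h(P)\big)_{X\overline XY\overline Y}=a_1+a_2$.
Equivalently, $P=\frac{\kappa}{4}(h_1-h_2)$ on $M^p_\kappa\times M^{n-p}_{-\kappa}$. Hence $a_1=\lambda>0>-\lambda=a_2$.

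Second, at a critical point of $f$ one has exactly $\alpha=c$, so the maximum principle applies directly to $\chi_f=\alpha h-4fP$.
\begin{itemize}
\item At a maximum point $p$ of $f$ and unit $X\in E_{-\lambda}(p)$: $0\geq\chi_f(X,\overline X)=c+4\lambda f(p)$.
\item At a minimum point $q$ and unit $Y\in E_{\lambda}(q)$: $0\leq\chi_f(Y,\overline Y)=c-4\lambda f(q)$.
\end{itemize}
Since $f>0$, this gives $c<0<c$, a contradiction. The Levi-Civita case is identical, using $\chi_w=\beta h-2wP$ and $\beta=c/(2w)$ at critical points; this gives $c\leq-4\lambda w(p)^2<0<4\lambda w(q)^2\leq c$.

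The sign information is indispensable. Without $a_1+a_2=0$, the same argument only yields $4a_{\max}f(q)\leq c\leq 4a_{\min}f(p)$, which is consistent when both eigenvalues are positive.
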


\begin{proof}
The Chern case is due to Huang and Wan in \cite[Proposition 4.1]{HW}. First let us assume that the metric $g$ has constant Bismut holomorphic sectional curvature, namely, $H^b_g=c$. Since $h=e^{2u}g$ is K\"ahler, we can apply Proposition \ref{prop1} to conclude that $(M^n,h)$ is Bochner-K\"ahler, hence by Theorem \ref{thm4} we know that $(M^n,h)$ is locally Hermitian symmetric, namely, $\nabla^hR^h=0$.  By (\ref{eq:Ab}), we know that for any vector field $x$ on $M$ we have
$$ 0 = \nabla^h_xR^h = \nabla^h_xL_h(A^b) = L_h(\nabla^h_xA^b), $$
so by the injectivity of $L_h$, Lemma \ref{lemma-injective}, we get $\nabla^h_xA^b=0$ for any $x$, hence the Hermitian symmetric $(1,1)$ tensor $A^b$ is  parallel on the compact K\"ahler manifold $(M^n,h)$.

\vspace{0.1cm}

\noindent {\bf Claim.} $A^b$ either has one eigenvalue or has exactly two non-zero eigenvalues whose sum is zero.

\vspace{0.1cm}

To see this, notice that as the tensor $A^b$ is parallel, its eigenspaces split $(M^n,h)$ into local product of K\"ahler manifolds. If $A^b$ has two distinct eigenvalues, say $\lambda \neq \mu$. Take unit eigenvector $X$, $Y$ of type $(1,0)$ corresponding to $\lambda$ and $\mu$, respectively. Then the mixed curvature will vanish: $R^h_{X\overline{X}Y\overline{Y}}=0$. But $R^h=L_h(A^b)$, we have
$$ 0 = R^h_{X\overline{X}Y\overline{Y}} = \big(L_h(A^b)\big)_{X\overline{X}Y\overline{Y}} = \lambda + \mu.$$
This shows that either $A^b$ is a constant multiple of $h$ or it has exactly two eigenvalues $\lambda$ and $-\lambda$, where $\lambda >0$ is a constant. So the claim is proved, and in the following we will divide the proof into two cases.

\vspace{0.1cm}

\noindent {\bf Case 1.} $A^b=ah$ for some constant $a$.

In this case, by (\ref{eq:Ab2}), we get $\chi_f = (\alpha -4af)h$. Write $F=\alpha-4af$, then this means that
$$ \sqrt{-1}\partial \overline{\partial } f = F \omega_h, $$
where $\omega_h$ is the K\"ahler form of $h$. Since $d\omega_h=0$, by taking the differential on the above line, we get $dF \wedge \omega_h=0$ which implies $dF=0$, hence $F$ must be a constant. On the other hand, by taking trace on the above line, we get $nF=\Delta f$, its integral over the compact manifold $M^n$ is zero, so $F$ is zero hence $f$ is a constant. Therefore the metric $g=e^{-2u}h=f^{-1}h$ is K\"ahler.

\vspace{0.1cm}

\noindent {\bf Case 2.} $A^b$ has exactly two eigenvalues $\lambda$ and $-\lambda$, where $\lambda >0$.

In this case, everywhere on $M^n$ the holomorphic tangent space splits as the direct sum of the two eigenspaces of $A^b$: $T^{1,0}M=E_{\lambda} \oplus E_{-\lambda}$. Again by (\ref{eq:Ab2}) we have
$$ \chi_f = \alpha h - 4fA^b, \ \ \ \ \alpha = c+\frac{1}{f}|\partial f|^2. $$
Let $p\in M$ be a point where $f$ reaches its maximum. Take a unit vector $X\in E_{-\lambda}(p)$. Then since the gradient of $f$ vanishes at $p$ and the complex Hessian of $f$ is non-positive, we get
$$ 0 \geq \chi_f(X,\overline{X}) = \alpha (p) -4f(p)(-\lambda) = c+4f(p)\lambda, $$
hence $c\leq -4f(p)\lambda <0$ as $\lambda >0$ and $f$ is positive. On the other hand, let $q$ be a minimum point of $f$ and take a unit vector $Y\in E_{\lambda }(q)$, then as the complex Hessian is non-negative at $q$, we get
$$ 0 \leq \chi_f(Y,\overline{Y}) = \alpha (q) - 4f(q)\lambda = c-4f(q)\lambda, $$
hence $c\geq 4f(q)\lambda >0$, contradicting to our previous conclusion that $c<0$. Therefore, this case cannot occur. We have thus completed the proof of Proposition \ref{prop2} for the Bismut case.

The Levi-Civita case goes similarly, by using (\ref{eq:Ag2}) and the complex Hessian of $w$ instead. Since the proof is exactly analogous, we will omit it here.
\end{proof}

\vspace{0.3cm}

\section{Locally conformally K\"ahler manifolds}

In this section, we will deal with the remaining part in the proofs of Theorems \ref{thm1} and \ref{thm2}.  First of all let us recall that a {\em locally conformally K\"ahler manifold} is a Hermitian manifold whose K\"ahler form $\omega$ satisfies
\begin{equation}
 d\omega = \theta \wedge \omega ,
\end{equation}
where $\theta$ is a closed $1$-form on $M^n$, called the {\em Lee form} of $(M^n,g)$. Clearly this $1$-form (when exists) is uniquely determined. Equivalently, on the universal covering space $\pi : \widetilde{M} \rightarrow M$, the lifted metric $\widetilde{g}:=\pi^{\ast}\!g$ is conformal to a K\"ahler metric $\widetilde h=e^{2u}\widetilde{g}$, where $\pi^{\ast}\theta = -2du$. Note that in general the metric $\widetilde h$ on $\widetilde{M}$ may not be complete.

Given a compact locally conformally K\"ahler manifold $(M^n,g)$, either $\theta$ is exact, in which case $g$ is globally conformally K\"ahler, or $\theta$ is not exact, in which case $(M^n,g)$ or any of its finite unbranched cover is not globally conformally K\"ahler. The latter case will be called {\em essential} or {\em strict} locally conformally K\"ahler. In this case $M^n$ is actually non-K\"ahlerian, namely, it admits no K\"ahler metric.

Throughout this section, $(M^n,g)$ will be assumed to be a compact locally conformally K\"ahler manifold which is strict. Denote by $\widetilde{g}$ the lifted (or pull-back) metric of $g$ on the universal covering space $\pi: \widetilde{M} \rightarrow M$. Let $u$ be a real-valued smooth function on $\widetilde{M}$ such that $\pi^{\ast}\theta = -2du$. Note that such a function is unique up to additive constants. Then $\widetilde{h}=e^{2u}\widetilde{g}$ is a K\"ahler metric on $\widetilde{M}$, which is not necessarily complete.

Assume that $g$ has constant Chern, Levi-Civita, or Bismut holomorphic sectional curvature. Then $\widetilde{g}$ will also have the same property, so by Proposition \ref{prop1}, the metric $\widetilde{h}$ will be Bochner-K\"ahler. As the metric $\widetilde{h}$ may not be complete, we do not know if it is symmetric any more. In this case, the result of Huang and Wan in \cite[Proposition 5.1]{HW} becomes a crucial tool, which stems from Kamishima's uniformization theorem \cite{Kamishima1994, Kamishima2005, Kamishima2006} and Fried's classification on compact similarity manifolds \cite{Fried1980}.

\begin{theorem}[Kamishima--Fried uniformization \cite{HW}]\label{thm:uniformization}
Let \((M^n,g)\) be a compact locally conformally K\"ahler manifold with \(n\geq2\) such that its Lee form is not exact. Assume that the associated K\"ahler metric $\widetilde{h}$ is Bochner-K\"ahler.  Then up to scaling $\widetilde{h}$ by a positive constant, there is a holomorphic
isometry
\[
 (\widetilde M,\widetilde h)
 \cong(\C^n\setminus\{0\},h_{0}),
\]
where $h_0$ is the standard Euclidean metric. Under this identification every deck transformation has the form
\begin{equation} \label{eq:deck}
 \gamma(z)=r_\gamma U_\gamma z,
 \qquad r_\gamma>0,\   U_\gamma\in U(n), \   \mbox{and} \ r_\gamma\neq1 \ \mbox{for at least one} \  \gamma .
\end{equation}
\end{theorem}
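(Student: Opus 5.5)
The plan is to reduce the statement to Fried's classification of closed similarity manifolds. First the Kähler metric \(\widetilde h\) must be shown to be flat, so that \((M^n,g)\) carries a Euclidean similarity structure. Fried's theorem then yields the model \(\C^n\setminus\{0\}\) with radial-unitary holonomy.

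\textbf{Step 1: deck transformations act by homotheties of \(\widetilde h\).} For a deck transformation \(\gamma\), the identity \(\gamma^\ast\pi^\ast\theta=\pi^\ast\theta\) gives \(d(\gamma^\ast u-u)=0\). Hence \(\gamma^\ast u=u+c_\gamma\) for a constant \(c_\gamma\). Since \(\widetilde g\) is \(\gamma\)-invariant, we get \(\gamma^\ast\widetilde h=e^{2c_\gamma}\widetilde h\), and each \(\gamma\) is a holomorphic homothety of \(\widetilde h\). Because \(\theta\) is not exact, \(u\) does not descend to \(M\). Therefore \(c_\gamma\neq 0\) for at least one \(\gamma\), which gives the constants \(r_\gamma=e^{c_\gamma}\) with \(r_\gamma\neq1\) for some \(\gamma\).

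\textbf{Step 2: flatness of \(\widetilde h\) (the main obstacle).} Here I would use Kamishima's uniformization for Bochner-flat Kähler manifolds. A Bochner-Kähler manifold of dimension \(n\geq2\) carries a locally homogeneous geometric structure: the associated circle (or line) bundle has a spherical CR / \((PU(n+1,1),S^{2n+1})\)-type structure. This gives a developing map \(\mathrm{dev}:\widetilde M\to X\) equivariant under a holonomy representation of the deck group. The homothety group of \(\widetilde h\) acts through the holonomy, and a holonomy element acting as a non-isometric homothety must fix a point of the model. The subgroup fixing a point at infinity and acting by homotheties is the similarity group of \(\C^n\), i.e.\ the model is forced into its Euclidean stratum.

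I would also try to give a more direct curvature argument. The function \(F=|R^{\widetilde h}|^2_{\widetilde h}\,e^{4u}\) is \(\gamma\)-invariant, so it descends to \(M\) and is bounded. Write \(R^{\widetilde h}=L_{\widetilde h}(P)\) with \(P=\frac{s}{2n(n+1)}\widetilde h+\frac{1}{n+2}r\), as in Lemma \ref{lemma3}. The second Bianchi identity in the Bochner-flat case makes \(P\) a Codazzi-type tensor. I would then study \(P\) along orbits of a non-isometric homothety, where \(u\to\pm\infty\), to show \(P\equiv0\). I expect this step to be the real difficulty, and I would fall back on Kamishima's theorem if the direct route fails.

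\textbf{Step 3: apply Fried and normalize.} Once \(\widetilde h\) is flat, its developing map is a local holomorphic isometry \(\widetilde M\to\C^n\). The deck group acts by complex similarities \(z\mapsto r_\gamma U_\gamma z+b_\gamma\), so \(M\) is a closed similarity manifold that is not Euclidean, because some \(r_\gamma\neq1\). Fried's classification then says the developing map is a diffeomorphism onto \(\C^n\setminus\{p\}\), and all holonomy elements fix \(p\). After translating \(p\) to \(0\), this gives \((\widetilde M,\widetilde h)\cong(\C^n\setminus\{0\},h_0)\) with \(\gamma(z)=r_\gamma U_\gamma z\). The holonomy is unitary up to scale, rather than merely in \(GL_n(\C)\), because the maps are \(\widetilde h\)-homotheties. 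Finally, the normalization of \(u\) (up to an additive constant) accounts for rescaling \(\widetilde h\) by a positive constant.
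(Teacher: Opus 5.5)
Your Steps 1 and 3 are correct, and they match how the cited source uses Fried. The paper itself proves nothing here: it imports the statement as Huang--Wan's Proposition 5.1, which rests on Kamishima's uniformization and Fried's classification. In Step 1 the deck group acts by holomorphic homotheties with ratios $e^{c_\gamma}$, not all equal to $1$ because $\theta$ is not exact. In Step 3, once $\widetilde h$ is flat, $M$ is a closed non-Euclidean similarity manifold, so its developing map is a diffeomorphism onto $\C^n\setminus\{p\}$ (simply connected since $2n\geq 4$) with holonomy fixing $p$.

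The gap is Step 2, which is the entire content of the theorem, and the mechanism you sketch there fails. Theorem \ref{thm4} is unavailable because $\widetilde h$ is incomplete. Also, the spherical CR developing map lives on the universal cover of the line bundle over $\widetilde M$, not on $\widetilde M$. What a deck transformation with $r_\gamma\neq1$ actually gives is some $g\in PU(n+1,1)$ with $\operatorname{Ad}(g)X=\lambda X$, $\lambda\neq1$, where $X\in\mathfrak{su}(n+1,1)$ generates the Reeb flow. Hence $X$ is nilpotent, and $g$ fixes the unique boundary fixed point $\infty$ of $X$; that much matches your ``must fix a point''. But the stabilizer of $\infty$ is the Heisenberg similarity group $H^{2n+1}\rtimes(\mathbb{R}_+\times U(n))$. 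It descends to the Euclidean similarity group of $\C^n$ only if $X$ is \emph{central} in the Heisenberg algebra, and nothing in your argument forces that.

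A non-central nilpotent $X$ in fact gives a non-flat model with every feature you use. Work on $\{y_1<0\}\subset H^{2n+1}$ with $\theta_0=dt+2\sum_j(x_j\,dy_j-y_j\,dx_j)$. Let $V=\partial_{x_1}-2y_1\partial_t$ generate left translation along $x_1$, and set $\eta=\theta_0/\theta_0(V)=-\theta_0/(4y_1)$. Then:
\begin{itemize}
\item $\eta$ is a Sasakian structure on a spherical CR manifold, so by Webster its transverse K\"ahler metric is Bochner-flat.
\item On the totally geodesic slice $z'=(z_2,\dots,z_n)=0$, this metric is $d\rho^2/\rho+d\sigma^2/\rho^3$ with $\rho=-y_1$ and $\sigma=(t+2x_1y_1)/4$. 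Its Gauss curvature is $-3/\rho$, so the model is not flat.
\item The Heisenberg dilations $(z,t)\mapsto(\lambda z,\lambda^2t)$ act on it as non-isometric holomorphic homotheties fixing $\infty$.
\end{itemize}

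Your argument does not exclude $\widetilde h$ being locally of this type. Your curvature route cannot exclude it either: $|R|\,e^{2u}$ is $\Gamma$-invariant for any homothetic deck action, so its boundedness is automatic and holds in this model too.

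Ruling this model out requires using compactness of $M$ globally. For example, its homothety group $H^{2n-1}\rtimes(\mathbb{R}_+\times U(n-1))$ acts transitively with compact isotropy. So a compact $M$ modeled on it would be complete, and $\Gamma$ would be a cocompact lattice in a non-unimodular group, which is impossible. Only after such an argument, or a precise citation of Kamishima's uniformization of Bochner-flat lcK structures as in Huang--Wan, is $\widetilde h$ flat and your Step 3 applicable.
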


Now we are ready to finish the proof of Theorems \ref{thm1} and \ref{thm2}.

\vspace{0.05cm}

\begin{proof}[{\bf Proof of Theorem \ref{thm1}.}]
Let $(M^n,g)$ be a compact locally conformally K\"ahler manifold such that its Levi-Civita holomorphic sectional curvature is constant: $H^g=c$. If $g$ is globally conformally K\"ahler, then by Proposition \ref{prop2} we know that $g$ must be K\"ahler, hence is a complex space form. So to prove Theorem \ref{thm1}, it suffices to rule out the strict case. Suppose that $g$ is not globally conformally K\"ahler, namely, the Lee form is not exact on $M^n$. Write $\pi: \widetilde{M} \rightarrow M$ for the universal cover, $\widetilde{g}=\pi^{\ast}\!g$ the pull-back, and $\widetilde{h}=e^{2u}\widetilde{g}$ the K\"ahler metric on $\widetilde{M}$. Then by Proposition \ref{prop1} we know that $\widetilde{h}$ is Bochner-K\"ahler, hence by scaling the metric $g$ by a suitable constant multiple if necessary, Theorem \ref{thm:uniformization} tells us that $(\widetilde M, \widetilde h)$ is holomorphically isometric to $({\mathbb C}^n\setminus \{ 0\}, h_0)$ where $h_0$ is the standard Euclidean metric, and under this identification, each deck transformation $\gamma$ of $\pi$ is in the form of (\ref{eq:deck}). Since the Levi-Civita holomorphic sectional curvature of $\widetilde g$ is a constant $c$, we have formula (\ref{eq:Ag}) and (\ref{eq:Ag2}), where $g$ and $h$ are replaced by $\widetilde g$ and $\widetilde h$. Since $\widetilde h$ can be identified with $h_0$ and is flat, we get $A^g=0$ by the injectivity of $L_{\widetilde{h}}$, that is, we have
$$ \beta \widetilde h - \chi_w =0, \ \ \ \beta = \frac{c}{2w}+\frac{1}{w}|\partial w|^2,  \ w=e^u.$$
The first equation says that $\beta \omega_0 =\sqrt{-1}\partial \overline{\partial}w$, where $\omega_0$ is the K\"ahler form of $h_0$. Taking exterior differentiation, we get $d\beta \wedge \omega_0=0$ hence $d\beta =0$ and $\beta$ is a constant.

Take deck transformation $\gamma$ such that $r=r_{\gamma} \neq 1$. By (\ref{eq:deck}) we have $\gamma^{\ast}\omega_0=r^2\omega_0$. On the other hand, since $w^{-2}h_0 = e^{-2u}\widetilde h = \widetilde g$ is invariant under $\gamma$, we get $\gamma^{\ast}w=rw$, or equivalently,
\begin{equation} \label{eq:gammaw}
w(rUz)=rw(z).
 \end{equation}
 Now applying $\gamma^{\ast}$ on both sides of the equation $\beta \omega_0=\sqrt{-1}\partial \overline{\partial}w$, we get $r^2\partial \overline{\partial}w = r\partial \overline{\partial}w$. As $r>0$ and $r\neq 1$, we conclude that $\partial \overline{\partial}w=0$. So $w$ is a pluriharmonic function on the simply-connected manifold ${\mathbb C}^n\setminus \{0\}$, hence is the real part of a holomorphic function, which can be extended across the origin by Hartogs Theorem, hence $w$ can be extended across the origin. By letting $z$ approaching $0$ in  (\ref{eq:gammaw}), we get $w(0)=rw(0)$ hence $w(0)=0$. So the origin becomes a maximum point for the non-positive harmonic function $-w$ on ${\mathbb C}^n$, which is a contradiction to the strong maximum principle. This shows that the strict case cannot occur, and Theorem \ref{thm1} is proved.
\end{proof}


\begin{proof}[{\bf Proof of Theorem \ref{thm2}.}]
The first half of the proof is strictly analogous to that in the proof of Theorem \ref{thm1}. Let $(M^n,g)$ be a compact locally conformally K\"ahler manifold such that its Bismut holomorphic sectional curvature is constant: $H^b=c$. If $g$ is globally conformally K\"ahler, then again by Proposition \ref{prop2} we know that $g$ must be K\"ahler, hence is a complex space form. Now suppose that  the Lee form is not exact on $M^n$. Again denote by $\pi: \widetilde{M} \rightarrow M$ the universal cover, $\widetilde{g}=\pi^{\ast}\!g$ the pull-back, and $\widetilde{h}=e^{2u}\widetilde{g}$ the K\"ahler metric on $\widetilde{M}$. By Proposition \ref{prop1} we know that $\widetilde{h}$ is Bochner-K\"ahler, hence by scaling the metric $g$ by a suitable constant multiple when necessary, Theorem \ref{thm:uniformization} applies to give us a holomorphically isometric identification: $(\widetilde M, \widetilde h)\cong ({\mathbb C}^n\setminus \{ 0\}, h_0)$, where $h_0$ is the standard Euclidean metric and the deck transformations are by (\ref{eq:deck}). Since the Bismut holomorphic sectional curvature of $\widetilde g$ is a constant $c$, we have formula (\ref{eq:Ab}) and (\ref{eq:Ab2}), where $g$ and $h$ are replaced by $\widetilde g$ and $\widetilde h$, respectively. Since $\widetilde h$ can be identified with $h_0$ and is flat, we get $A^b=0$ by the injectivity of $L_{\widetilde{h}}$, that is, we have
$$ \alpha \widetilde h - \chi_f =0, \ \ \ \alpha = c+\frac{1}{f}|\partial f|^2, \ \ f=e^{2u}.$$
Again the first equation says that $\alpha \omega_0 =\sqrt{-1}\partial \overline{\partial}f$, and by taking exterior differentiation we conclude that $d\alpha \wedge \omega_0=0$, hence $\alpha$ is a constant. Since $\omega_0=\sqrt{-1} \partial \overline{\partial} |z|^2$, we get
$$ \partial \overline{\partial} F=0, \ \ \ \ \ F=f-\alpha|z|^2.$$
On the simply-connected manifold ${\mathbb C}^n\setminus \{ 0\}$, the pluriharmonic function $F$ can be written as twice of the real part of a holomorphic function $\Phi$, which can be extended across the origin by Hartogs Theorem, that is,
$F=\Phi + \overline{\Phi}$, where $\Phi$ is holomorphic on ${\mathbb C}^n$.

For any deck transformation $\gamma$, which sends $z$ to $rUz$, we have $\gamma^{\ast}h_0=r^2h_0$ and $\gamma^{\ast}f=r^2f$ as $\widetilde g=f^{-1}h_0$ is invariant under $\gamma$. Hence $\gamma^{\ast}F=r^2F$, namely,
$$ F(rUz)- r^2F(z) = 0. $$
In particular, by taking $\gamma$ with $r\neq 1$ we conclude that $F(0)=0$. So $\Phi (0)$ has zero real part, and by subtracting a pure imaginary constant from $\Phi$, we may assume that $\Phi (0)=0$.

For any fixed $\gamma$ with $r\neq 1$,  the holomorphic function $\Phi(rUz)-r^2\Phi (z)$ on ${\mathbb C}^n$ has vanishing real part, so it must be a constant, and the constant is zero by our choice of $\Phi (0)=0$. So we have
$$ \Phi (rUz) = r^2 \Phi (z). $$
Write $\Phi = \sum_{m=1}^{\infty } \Phi_m$ as a power series where $\Phi_m$ is a homogeneous polynomial of degree $m$ in $z=(z_1, \ldots , z_n)$.  The above equation leads to
$$ \Phi_m (Uz) = r^{2-m}\Phi_m (z). $$
Since $U$ is unitary and preserves the unit sphere $\{ z\in {\mathbb C}^n \mid |z|=1\}$, we have
$$  \max_{|z|=1} |\Phi_m(Uz)| = \max_{|z|=1} |\Phi_m (z)|. $$
Therefore $\Phi_m=0$ for any positive $m\neq 2$, hence $\Phi$ is a homogeneous quadratic polynomial in $z$ and
\begin{equation}  \label{eq:f}
f = \alpha |z|^2 + F = \alpha |z|^2 + \,^t\!zSz + \overline{^t\!zSz},
\end{equation}
where $S$ is a symmetric $n\times n$ matrix and here we have written $z$ as a column vector. Write $q(z)=\,^t\!zSz$ for the homogeneous quadratic polynomial. Since  on ${\mathbb C}^n\setminus \{0\}$ we have $f>0$ and
\begin{equation} \label{eq:alphac}
 \alpha = c+ \frac{|\partial f|^2 }{f},
\end{equation}
so $\alpha$ is real. We want to derive from these restrictions that $q=0$ and $c=0$. By (\ref{eq:f}) and (\ref{eq:alphac}), we get
\begin{eqnarray*}
 \frac{\partial f}{\partial z_i} &= & \alpha \overline{z}_i + 2\sum_j S_{ij}z_j, \\
 |\partial f|^2 & =&  \alpha^2|z|^2  + 4\,^t\!zS\overline{S}\overline{z} + 2\alpha q(z) + 2 \alpha \overline{q(z)} \\
&=&  (\alpha -c)\big( \alpha |z|^2 + q(z) + \overline{q(z)} \big).
 \end{eqnarray*}
So by looking at the $(2,0)$ and $(1,1)$ parts in the last equation above, we obtain
\begin{equation} \label{eq:3parts}
(\alpha +c)q(z)=0, \ \  \ \ -\alpha c I = 4 S \overline{S}.
\end{equation}
Now assume that $q$ is not identically zero. Then we have $\alpha +c=0$. Plug it into the second equation of (\ref{eq:3parts}) we get $\alpha^2I=4S\overline{S}$. Since $S$ is symmetric, there exists unitary matrix $V$ such that $\,^t\!VSV=S'$ is diagonal with diagonal values being real and non-negative. Plug into the previous equation we know that $S'=\frac{|\alpha|}{2}I$. Hence $S=\frac{|\alpha|}{2} \overline{V} V^{-1}$. Let us consider the point $z_0=\sqrt{-1}Ve_1$ where $e_1=\,^t\!(1, 0, \cdots , 0)$. We have $|z_0|^2=1$ and
$$ q(z_0) = - \,^t\!e_1\mbox{}^t\!V S V e_1 = -\frac{|\alpha|}{2}, \ \ \ \ \ f(z_0) = \alpha  -\frac{|\alpha|}{2} -\frac{|\alpha|}{2} \le 0. $$
This contradicts to our assumption that $f>0$ on $\widetilde M$, so we must have $q=0$. Now by $f=\alpha |z|^2$ we know  that $\alpha >0$ hence by the second equation of (\ref{eq:3parts}) we get $c=0$. By the definition of $f$, $\widetilde g = f^{-1}h_0 = \frac{1}{\alpha |z|^2} h_0$ is a constant multiple of the standard Hopf metric whose K\"ahler form is
$$ \alpha \pi^{\ast} \omega_g = \frac{\sqrt{-1}}{|z|^2} \partial \overline{\partial} |z|^2. $$

It remains to show that $(M^n,g)$ is an isosceles Hopf manifold. Let $\Gamma=\operatorname{Deck}(\pi)$. By Theorem~\ref{thm:uniformization}, every $\gamma\in\Gamma$ is of the form $\gamma(z)=r_\gamma U_\gamma z$, where $r_\gamma>0$ and $U_\gamma\in U(n)$. Define $\ell:\Gamma\to\mathbb R$ by $\ell(\gamma)=\log r_\gamma$. Since $r_{\gamma_1\gamma_2}=r_{\gamma_1}r_{\gamma_2}$, the map $\ell$ is a group homomorphism.

We first claim that $\ker\ell$ is finite. Indeed, if $\gamma\in\ker\ell$, then $r_\gamma=1$, so $\gamma$ is represented by a unitary transformation $U_\gamma\in U(n)$. Since $\Gamma$ acts properly discontinuously on $\mathbb C^n\setminus\{0\}$, $\ker\ell$ is a discrete subgroup of the compact group $U(n)$, and hence is finite.

We next claim that $\ell(\Gamma)$ is a discrete subgroup of $\mathbb R$. Suppose otherwise. Then there exists a sequence of distinct elements $\gamma_j\in\Gamma$ such that $\ell(\gamma_j)\to0$. Write $\gamma_j(z)=e^{\ell(\gamma_j)}U_jz$, where $U_j\in U(n)$. By compactness of $U(n)$, after passing to a subsequence, we may assume that $U_j$ converges in $U(n)$. It follows that the nontrivial deck transformations $\delta_j:=\gamma_{j+1}\gamma_j^{-1}$ converge to the identity transformation, contradicting the proper discontinuity of the deck action. Hence $\ell(\Gamma)$ is discrete.

Since the Lee form is not exact, Theorem~\ref{thm:uniformization} guarantees that $r_\gamma\neq1$ for some $\gamma\in\Gamma$. Thus $\ell(\Gamma)$ is a nonzero discrete subgroup of $\mathbb R$, and hence $\ell(\Gamma)=a\mathbb Z$ for some $a>0$. Choose $\gamma_0\in\Gamma$ such that $\ell(\gamma_0)=a$. For any $\gamma\in\Gamma$, there exists $m\in\mathbb Z$ such that $\ell(\gamma)=ma$, and therefore $\gamma\gamma_0^{-m}\in\ker\ell$. Hence $\Gamma=(\ker\ell)\langle\gamma_0\rangle$. Since $\ker\ell$ is finite, $\langle\gamma_0\rangle$ has finite index in $\Gamma$. Consequently,
$$ \frac{\mathbb C^n\setminus\{0\}}{\langle\gamma_0\rangle} \longrightarrow \frac{\mathbb C^n\setminus\{0\}}{\Gamma}\simeq M $$
is a finite unbranched covering.

Replacing $\gamma_0$ by its inverse if necessary, we may write $\gamma_0(z)=rUz$, where $0<r<1$ and $U\in U(n)$. Since $U$ is unitary, there exists $V\in U(n)$ such that $$
V^{-1}UV
=
\operatorname{diag}
\bigl(e^{\sqrt{-1}\theta_1},\ldots,e^{\sqrt{-1}\theta_n}\bigr).
$$
After the unitary change of coordinates $w=V^{-1}z$, the generator $\gamma_0$ becomes
$$
w\longmapsto
\operatorname{diag}
\bigl(
re^{\sqrt{-1}\theta_1},\ldots,re^{\sqrt{-1}\theta_n}
\bigr)w.
$$
Thus all eigenvalues of the generating contraction have the same modulus $r\in(0,1)$.

Moreover,
$$
\gamma_0^*
\left(\frac{1}{\alpha|z|^2}h_0\right)
=
\frac{1}{\alpha|rUz|^2}\,r^2h_0
=
\frac{1}{\alpha|z|^2}h_0.
$$
Hence the metric descends to $(\mathbb C^n\setminus \{0\})/\langle\gamma_0\rangle$ as a constant multiple of the standard Hopf metric. Therefore, $(M^n,g)$ is an isosceles Hopf manifold. This completes the proof of Theorem \ref{thm2}.
\end{proof}

\vspace{0.3cm}

\noindent\textbf{Declaration on competing interests.}
All authors declare that there are no competing interests for this article.

\end{document}